\documentclass[a4paper,reqno]{amsart}
\usepackage[english]{babel}
\usepackage{amsmath,amsthm,amssymb,amsfonts}

\usepackage{soul}
\usepackage{comment}
\usepackage{hyperref}
\usepackage{mathtools}
\usepackage[T1]{fontenc}
\usepackage[utf8]{inputenc}
\usepackage{dsfont}
\hypersetup{
 colorlinks,
 linkcolor={blue!90!black},
 citecolor={red!80!black},
 urlcolor={blue!50!black}
}
\usepackage{tikz}
\usepackage{tikz-cd}
\usepackage{graphicx}
\usepackage[all,cmtip]{xy}
\usepackage{mleftright}
\usepackage{booktabs}
\usepackage{cite}

\newtheorem{theorem}{\sc \textbf{Theorem}}[section]  
\newtheorem{proposition}[theorem]{\sc \textbf{Proposition}}   
        
\newtheorem{lemma}[theorem]{\sc \textbf{Lemma}}

\theoremstyle{remark}
\newtheorem{definition}[theorem]{\sc \textbf{Definition}}

\newtheorem{remark}[theorem]{\sc \textbf{Remark}}
\newtheorem{example}[theorem]{\sc \textbf{Example}}




\def\cK{{\mathcal K}}

\def\cM{{\mathcal M}}

\def\cO{{\mathcal O}}
\def\cP{{\mathcal P}}

\def\cU{{\mathcal U}}


\numberwithin{equation}{section}

\newcommand{\loc}{\mathrm{loc}}

\DeclareMathOperator*{\esssup}{ess\,sup}

\usepackage{pdfpages}

\makeatletter
\@namedef{subjclassname@1991}{Mathematics Subject Classification}
\usepackage{enumerate}
\graphicspath{{images/}}
\title[Carleson measures on locally finite trees]{Carleson measures on locally finite trees}
\author[A. Ottazzi]{Alessandro Ottazzi}
\address[A. Ottazzi]{School of Mathematics and Statistics \\  University of New South
Wales \\ 2052 Sydney  \\  Australia}
\email{a.ottazzi@unsw.edu.au}
\author[F. Santagati]{Federico Santagati}
\address[F. Santagati]{School of Mathematics and Statistics \\  University of New South
Wales \\ 2052 Sydney  \\  Australia}
\email{f.santagati@unsw.edu.au}

\thanks{The authors were both supported by the Australian Research Council, grant DP220100285. Santagati was partially supported by the INdAM - GNAMPA Project ``$L^p$ estimates for singular integrals in nondoubling settings'' (CUP E53C23001670001).
}
\keywords{Carleson measures, trees, Poisson integral, BMO}
\subjclass[2020]{05C05, 05C21,31C05, 43A99}
\begin{document} \maketitle
\begin{abstract}
We provide a characterization of Carleson measures on locally finite trees. This characterization establishes the connection between Carleson measures and the boundedness of a suitable Poisson integral between $L^p$-spaces. Additionally, when the tree has bounded degree, we investigate the relationship between Carleson measures and BMO functions defined on the boundary of the tree. 
\end{abstract}
\section{Introduction and preliminaries}
\subsection{Introduction}
Carleson measures were originally introduced and characterized in the Euclidean setting by L. Carleson \cite{Carl1,Carl2}. In this context, it was proved that a positive measure $\sigma$ is a Carleson measure if and only if the classical Poisson integral defines a bounded operator from $L^p(\mathbb R^n, \mathrm{d}x)$ to $L^p(\mathbb R^+\times \mathbb R^n, \sigma)$. \\  Subsequently, C. Fefferman and E. M. Stein established a connection between Carleson measures and functions of bounded mean oscillation on $\mathbb R^n$ ($BMO(\mathbb R^n)$). Indeed, they exhibited a suitable class of operators acting on $BMO(\mathbb R^n)$ such that the square of the image of a function is the density of a Carleson measure with respect to the Lebesgue measure \cite{FS}. 
These results have been generalized to other contexts. For instance, when $\mathbb{R}^n$ is replaced by a space of homogeneous type \cite{HaS}, or when $\mathbb{R}^+\times \mathbb{R}^n$ is replaced by a homogeneous tree (or more in general, by a radial tree), analogous results have been obtained \cite{CCS, CCS2}.     \smallskip \\ 
In this note, we aim to provide a characterization of Carleson measures on locally finite trees without any further restriction on the geometry of the tree. This is the content of Theorem \ref{th: Poisson}, which is proved in 
Section \ref{sec:2}. It is worth mentioning that a tree is not a product space; rather, it is the image of a surjective map 
on the product of the boundary of the tree itself and $\mathbb Z$. We use a natural definition of Carleson measures involving sectors instead of cylinders, which adapts to our setting the definition given in \cite{CCS, CCS2, CCPS}.
We point out that, because of the lack of symmetries in our settings, we cannot exploit the techniques used in the aforementioned papers. 
In particular, we introduce a suitable Laplacian associated with a random walk, where the probability of transitioning from a vertex to a neighbour is nonzero only if the neighbour lies below the original vertex. This hierarchical structure on the tree arises naturally from the choice of a root, which in this paper will be a point of the boundary of the tree. We then give a Poisson integral representation formula for harmonic functions associated with this distinguished Laplacian. In Section \ref{sec:2}, we introduce a  Hardy space $H^p$ containing harmonic functions, which serves as the discrete counterpart of the Hardy space $H^p(\mathbb{R}^+\times \mathbb{R})$ on the upper half-plane. It turns out that, when $p>1$, $H^p$ characterizes the space of harmonic functions which are Poisson integrals of $L^p$ functions on the boundary of the tree, see Theorem \ref{th: charactPoisson}. Subsequently, Carleson measures $\sigma$ are characterized as measures on the tree for which the Poisson integral maps continuously the natural $L^p$ space on the boundary of the tree to  $L^p(\sigma)$. This property is shown to be equivalent to the boundedness of the identity map from $H^p$ to $L^p(\sigma)$.\\ In Section \ref{sec:3}, we focus on trees with bounded degree and, in Theorem \ref{th: BMO},  we prove a result in the spirit of \cite{FS, HaS}, which relates Carleson measures to $BMO$ functions on the boundary of the tree.  We show that a class of integral operators whose kernels satisfy suitable cancellation and decay properties maps $BMO$ functions to functions that are densities of Carleson measures with respect to a suitable reference measure. Additionally, we prove that a converse statement holds true. In fact, Theorem \ref{th: BMO} provides a characterization of the $BMO$ space on the boundary of a tree. \smallskip \\ 
Throughout the paper, $C$ will denote a positive constant which may vary from line to line and that is independent of any involved variable but may depend on fixed parameters. Sometimes, we will stress such a dependence by adding a subscript.
\subsection{Preliminaries and notation} In this section we introduce the notation and recall some well-known results on trees. \\ 
Let $T$ be a tree and let $d$ denote the usual geodesic distance on $T$. We fix a root of the tree by choosing a point $\omega_*$ in the boundary of $T$, that we denote by $\Omega$ (see \cite[Chapter I.1]{FTN} for a detailed definition). We then introduce the {\it punctured boundary} $\partial T=\Omega \setminus \{\omega_*$\}. The choice of a root induces a partial order on $T$: given two vertices $x,y \in T$, we say that $x$ lies below $y$ (or equivalently, $y$ lies above $x$) if $y \in [x,\omega_*)$, where $[x,\omega_*)$ denotes the infinite geodesic starting from $x$ and ending in $\omega_*$. Similarly, we say that $\omega \in \partial T$ lies below the vertex $x$ if $x \in (\omega,\omega_*),$ that is $x$ belongs to the doubly infinite geodesic with endpoints $\omega$ and $\omega_*$.   We shall write $x \le y$ whenever $x \in T \cup \partial T$ lies below $y \in T$. \\ We say that a vertex $x$ is a neighbour of $y \in T$ if there is one edge connecting $x$ and $y$ or, equivalently, if $d(x,y)=1$. In this case, we write $x \sim y$. \\ We fix once and for all an origin $o \in T$ and we denote by $\{x_j\}_{j=0}$ an enumeration of the geodesic starting at $o$ and ending in $\omega_*$ such that $x_j\sim x_{j+1}$ for every $j\ge 0$. We define the level of a vertex by $$\ell(x)=\lim_{j \to \infty} j-d(x,x_j) \qquad \forall x \in T.$$  
 For every $x \in T$ we define the set of successors by
$$s(x)=\{y \sim x \ : \ \ell(y)=\ell(x)-1\}$$ and the predecessor of $x \in T$ {by} $$p(x)=\{y \sim x \ : \ \ell(y)=\ell(x)+1\}.$$ 
We set $p^0(x)=x$ and define inductively $p^n(x)=p(p^{n-1}(x))$ for every $x \in T$ and $n \ge 1$. Similarly, we set $s_0(x)=\{x\}$ and define \begin{align}s_n(x)=\cup_{y \in {s_{n-1}(x)}}s(y) \qquad \forall x \in T, n \ge 1.\end{align}
We say that a positive function $m$ on $T$ is a {\it flow measure} if it satisfies the conservation property \begin{align}\label{def:flow}
    m(x)=\sum_{y \in s(x)}m(y) \qquad \forall x \in T.
\end{align} Observe that by iterating \eqref{def:flow}, a flow measure also satisfies 
\begin{align*}
    m(x)=\sum_{y \in s_n(x)}m(y) \qquad \forall x \in T, n \in \mathbb N.
\end{align*}
Given two points $\eta,\zeta \in \overline{T}:=T \cup \partial T$,  their   confluent is
$$\eta \wedge \zeta=\arg\min\{\ell(x) \ :  x\in T, \eta,\zeta \le x\},$$
and for every $x \in T$, the sector $T_x$ and its boundary at infinity $\partial T_x$ are \begin{align*}
    T_x&=\{y \in T \ : y \le x\}, \\ 
    \partial T_x&=\{\omega \in \partial T \ : \omega \le x\}.
\end{align*} We define the Gromov distance on $\overline T \times \overline T$, denoted $\rho$, by
\begin{align*}
\rho(\eta,\xi)=\begin{cases}0 &\text{if $\eta=\xi$}, \\ e^{\ell(\eta\wedge \xi)} &\text{otherwise.}
    \end{cases}
\end{align*}
It is straightforward that the collection of balls with positive radii in $(\partial T, \rho)$ consists of $\{\partial T_x\}_{x \in T}$. Indeed, the Gromov distance of two points only depends on the level of their confluent. Moreover, it is clear that if $\partial T_y \cap \partial T_x \ne \emptyset$, then either $\partial T_x \subset \partial T_y$ or $\partial T_y \subset \partial T_x$. Analogous considerations hold if $\partial T_x$ and $\partial T_y$ are replaced by $T_x$ and $T_y$, respectively.\\ 
Observe that for every positive measure $\nu$ on $\partial T$ that is finite and positive on balls, there is an associated natural flow measure  given by 
\begin{align*}
    m_\nu(x)=\nu(\partial T_x) \qquad \forall x \in T.
\end{align*}   From now on, we shall always assume that a measure on $\partial T$ is finite and positive on balls. \\  
 Given a measure $\nu$ on $\partial T$ and a measure $m$ on $T$, we denote by $\|\cdot\|_{L^p(\partial T, \nu)}$ and $\|\cdot\|_{L^p(T, m)}$ the corresponding $L^p$-norms. Sometimes, the measure will be omitted from the norm subscript if it is clear from the context.
 
 \begin{definition}\label{def:Phi}
    We define the map $\Phi : \partial T \times \mathbb Z \to T$ such that $\Phi(\omega,j)$ is the unique vertex which lies above $\omega$ at level $j$. Namely, $\Phi$ is uniquely defined by $\Phi(\omega,j) \in (\omega,\omega_*)$ and $\ell(\Phi(\omega,j))=j$.
\end{definition}
Throughout the paper, we will make instrumental use of the Hardy--Littlewood maximal operator, whose definition we briefly recall.
Given a positive measure $\nu$ on $\partial T$ we define by $\cM$ the associated Hardy--Littlewood maximal operator, namely, 
\begin{align}
    \cM f(\omega_0)&=\sup_{r>0}\frac{1}{\nu(B_\rho(\omega_0,r))}\int_{B_\rho(\omega_0,r)} |f(\omega)|\ \mathrm{d}\nu(\omega)\nonumber \\
    &=\sup_{x \in (\omega_0,\omega_*)}\frac{1}{m_\nu(x)}\int_{\partial T_x}|f(\omega)| \ \mathrm{d}\nu(\omega) \qquad \forall \omega_0 \in \partial T, \label{HL:DEF}
\end{align} 
where $B_\rho(\omega_0,r)$ denotes the ball in $(\partial T,\rho)$ centered at $\omega_0$ with radius $r$ with respect to the distance $\rho$ and $f$ is a locally integrable function. \\ 
The next proposition establishes the weak type $(1,1)$ of the Hardy-Littlewood maximal operator on $(\partial T, \rho, \nu)$ independently of the choice of the measure $\nu$. This is a straightforward consequence of the covering properties of $\{{\partial T_x}\}_{x \in T}$ discussed above. For the sake of completeness, we provide a proof.
\begin{proposition}\label{p:HL}
    Let $\nu$ be a measure on $\partial T$ and $\cM$ denote the associated Hardy--Littlewood maximal operator on $\partial T$. Then,
    \begin{align*}
        \|\cM\|_{L^1(\partial T,\nu)\to L^{1,\infty}(\partial T,\nu)}\le 1
    \end{align*} and  $\cM$ is bounded on $L^p(\partial T,\nu)$ for every $p>1$.
\end{proposition}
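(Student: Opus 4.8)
The plan is to exploit the nested structure of the balls $\{\partial T_x\}_{x\in T}$, which is the key feature that makes the maximal operator tame here. The essential point, noted already in the excerpt, is that any two such balls are either disjoint or nested: if $\partial T_x\cap\partial T_y\neq\emptyset$ then one contains the other. This is a much stronger covering property than a Vitali-type lemma, and it will give the operator norm $1$ with no loss.

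First I would prove the weak type $(1,1)$ bound. Fix $f\in L^1(\partial T,\nu)$ and $\lambda>0$, and let $E_\lambda=\{\omega_0\in\partial T:\cM f(\omega_0)>\lambda\}$. By the second expression for $\cM$ in \eqref{HL:DEF}, for each $\omega_0\in E_\lambda$ there is a vertex $x\in(\omega_0,\omega_*)$ with $\frac{1}{m_\nu(x)}\int_{\partial T_x}|f|\,\mathrm d\nu>\lambda$; call such a vertex \emph{admissible}, and note that then $\omega_0\in\partial T_x\subset E_\lambda$, so $E_\lambda=\bigcup_{x\ \mathrm{admissible}}\partial T_x$. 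Now I would extract a disjoint subfamily: since $m_\nu(x)=\nu(\partial T_x)<\infty$ and levels are integers, among the admissible vertices lying above a given point there is one of maximal level; equivalently, let $\cF$ be the set of admissible vertices $x$ that are maximal with respect to $\le$ among admissible vertices (such maximal elements exist because an increasing chain $x_1\le x_2\le\cdots$ of admissible vertices would have $m_\nu(x_j)\to 0$ while $\int_{\partial T_{x_j}}|f|\,\mathrm d\nu\le\|f\|_1$, contradicting admissibility for large $j$). By the dichotomy, the sectors $\{\partial T_x\}_{x\in\cF}$ are pairwise disjoint and their union is still all of $E_\lambda$. Hence
\begin{align*}
\nu(E_\lambda)=\sum_{x\in\cF}\nu(\partial T_x)=\sum_{x\in\cF}m_\nu(x)<\frac1\lambda\sum_{x\in\cF}\int_{\partial T_x}|f|\,\mathrm d\nu=\frac1\lambda\int_{\bigcup_{x\in\cF}\partial T_x}|f|\,\mathrm d\nu\le\frac1\lambda\|f\|_{L^1(\partial T,\nu)},
\end{align*}
which is exactly $\|\cM\|_{L^1\to L^{1,\infty}}\le 1$.

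The $L^p$ boundedness for $p>1$ then follows from the weak $(1,1)$ bound together with the trivial bound $\|\cM f\|_\infty\le\|f\|_\infty$ by the Marcinkiewicz interpolation theorem. Alternatively, one can argue directly: $\cM$ is clearly sublinear, and interpolating the endpoint estimates $(1,1)$-weak and $(\infty,\infty)$ gives boundedness on $L^p$ for all $1<p<\infty$.

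The only point requiring care — and the main obstacle — is the extraction of the disjoint subfamily $\cF$ with $\bigcup_{x\in\cF}\partial T_x=E_\lambda$. In the classical Euclidean setting one needs a Vitali covering lemma and loses a dimensional constant; here the nestedness dichotomy replaces it, but one must justify that maximal admissible vertices exist and that every admissible sector is contained in a maximal one. This is where finiteness of $\nu$ on balls is used: it forces $m_\nu(x)\to 0$ along any strictly increasing chain of vertices (since such a chain of sectors has intersection a single boundary point, of $\nu$-measure controlled by the decreasing tail), so no infinite ascending chain of admissible vertices can exist, and Zorn's lemma (or simply choosing the admissible vertex of largest level above each $\omega_0$) produces $\cF$. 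Once this is in place, the rest is the short computation displayed above.
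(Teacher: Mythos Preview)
Your overall strategy coincides with the paper's: exploit the nested-balls dichotomy to write $E_\lambda$ as a disjoint union of admissible sectors and then sum. The paper simply asserts that a pairwise disjoint subfamily with the same union can be extracted; you try to justify this via maximal admissible vertices, which is the right idea, but the justification you give is backwards.

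In the paper's order, $x\le y$ means $x$ lies \emph{below} $y$, so along a strictly increasing chain $x_1\le x_2\le\cdots$ the sectors satisfy $\partial T_{x_1}\subset\partial T_{x_2}\subset\cdots$ and hence $m_\nu(x_j)$ is \emph{increasing}, not tending to $0$. (Your ``intersection a single boundary point'' picture describes a \emph{decreasing} chain.) Moreover, if $m_\nu(x_j)\to 0$ held, admissibility $\int_{\partial T_{x_j}}|f|\,\mathrm d\nu>\lambda\,m_\nu(x_j)$ would become \emph{easier}, not contradicted. The correct argument is: along an infinite strictly increasing chain of admissible vertices one has $m_\nu(x_j)\uparrow\nu(\partial T)$. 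If $\nu(\partial T)=\infty$, then eventually $\lambda\,m_\nu(x_j)>\|f\|_{L^1}$, contradicting admissibility, so the chain must be finite and a maximal element exists. If $\nu(\partial T)<\infty$ and such an infinite chain exists, passing to the limit in the admissibility inequality gives $\|f\|_{L^1}\ge\lambda\,\nu(\partial T)\ge\lambda\,\nu(E_\lambda)$, which is the desired bound directly. With this repair the extraction of $\cF$ is justified and the rest of your argument (and the interpolation to $L^p$) is correct.
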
 
\begin{proof}
   It suffices to prove the weak type (1,1) boundedness since the statement about the $L^p$ bounds follows directly by interpolating with the obvious $L^\infty$ boundedness. Fix $\lambda>0$ and a function $f \in L^1(\partial T,\nu)$.  Define $E_\lambda=\{\cM f>\lambda\}$ and choose a collection of balls $\{\partial T_{x_j}\}_{j \in J}$ in $\partial T$ such that 
   \begin{align*}
       &E_\lambda \subset \cup_{j \in J} \partial T_{x_j}, \\ 
       &\frac{1}{\nu(\partial T_{x_j})}\int_{\partial T_{x_j}}|f| \ d\nu>\lambda \qquad \forall j \in J.
   \end{align*} Since two balls with nonempty intersection are such that one contains the other, we can extract a subcollection of pairwise disjoint balls $\{\partial T_{x_j}\}_{j \in J'}$ such that $$\cup_{j \in J} \partial T_{x_j}=\cup_{j \in J'} \partial T_{x_j}.$$ It follows that 
   \begin{align*}
       \nu(E_\lambda)\le \sum_{j \in J'} \nu(\partial T_{x_j}) \le \frac{1}{\lambda}\|f\|_{L^1(\partial T,\nu)}.
   \end{align*}

\end{proof} 
\section{Poisson integral and Carleson measures}\label{sec:2}
From now on we shall assume that $T$ is a locally finite tree, that is, every vertex in $T$ has a finite number of neighbours. We stress that we are not assuming that the number of neighbours of a vertex is a bounded function. While the local finiteness is not essential for part of our result, it simplifies our approach by avoiding some technicalities. \\ 
We fix once and for all a positive measure $\nu$ on $\partial T.$ We introduce a Laplace operator $\Delta$ on $T$ that will be central later. For a given function $f$ on $T$ we define 
\begin{align*}
    \Delta f(x)=f(x)-\sum_{y \in s(x)}f(y)\frac{m_\nu(y)}{m_\nu(x)}, \qquad \forall x \in T,
\end{align*}
where $m_\nu$ is the flow measure induced by $\nu$. \\ $\Delta$ is a {\it probabilistic Laplacian} in the sense that $$\Delta=I-P$$ and $P$ acts on a function $f$ by $Pf(x)=\sum_{y \in T}p(x,y)f(y)$ where
\begin{align*}
    0\le p(x,y)=\begin{cases} 0 &\text{if $x \not \sim y$ or $y=p(x)$},\\ 
    \frac{m_\nu(y)}{m_\nu(x)} &\text{if $y \in s(x)$},
    \end{cases} 
\end{align*} and 
\begin{align*}
    \sum_{y \in T}p(x,y)=1 \qquad \forall x \in T.
\end{align*}Notice that, since there are no restrictions on the flow measure $m_\nu$, $p(x,y)$ can be arbitrarily close to either $0$  or $1$ when $y \in s(x)$ and it is zero when $y=p(x)\sim x$. \\ 
We say that a function $f$ is {\it harmonic} on $S \subset T$ if $$\Delta f(x)=0\qquad \forall x \in S.$$

Observe that a harmonic function $f$ on $T$ satisfies
\begin{align}\label{harmonic}
    f(x)m_\nu(x)=\sum_{y \in s_n(x)}f(y)m_\nu(y) \qquad \forall x \in T, n \in \mathbb N.
\end{align} Indeed, \eqref{harmonic} holds trivially when $n=1$. 
We proceed by induction: assuming \eqref{harmonic} for some $n\ge 1$
and exploiting the fact that $f$ is harmonic for all $y\in s_n(x)$,
\begin{align*}
      f(x)m_\nu(x)&=\sum_{y \in s_n(x)}m_\nu(y)f(y)\\ &=\sum_{y \in s_n(x)}m_\nu(y)\sum_{z \in s(y)}\frac{m_\nu(z)}{m_\nu(y)}f(z)\\&=\sum_{z \in s_{n+1}(x)}m_\nu(z)f(z) \qquad \forall x \in T,
\end{align*} which proves \eqref{harmonic}.

Therefore, the limit
\begin{align}\label{poisson:sum}
    \lim_{n \to \infty} \sum_{y \in s_n(x)}f(y)\frac{m_\nu(y)}{m_\nu(x)}
\end{align} 
makes sense and it is equal to $f(x)$. Thus, since $s_n(x)$ tends to $\partial T_x$ as $n\to \infty$ in a suitable sense, \eqref{poisson:sum} suggests that if $f$ is good enough, then a Poisson integral representation formula for harmonic function holds. To this purpose, we need the following notion of continuous extension of a function on $\overline{T}$. 
\begin{definition}
    We say that a function $f$ on $T$ admits a continuous extension on $\overline{T}$ if there exists a function $g$ on $\partial T$ such that $\lim_{x \to \omega}f(x)=g(\omega)$  for a.e. $\omega \in \partial T$. The above limit is defined as follows: for every $\varepsilon>0$ and a.e. $\omega \in \partial T$ there exists a $\delta=\delta(\varepsilon,\omega)>0$ such that
    \begin{align*}
        \rho(\omega,x)<\delta \implies |f(x)-g(\omega)|<\varepsilon.
    \end{align*} With a slight abuse of notation we  denote by $f$ the continuous extension of $f$ to $\overline{T}$.
\end{definition} 
The following result provides a Poisson representation formula for harmonic functions. While the proof is not hard, we present the details for the reader's convenience.
\begin{proposition}
    Let $f$ be a bounded harmonic function on $T$ that admits a continuous extension to $\overline{T}$. Then, 
    \begin{align*}
    f(x)=\frac{1}{m_\nu(x)}\int_{\partial T_x} f(\omega) \ \mathrm{d}\nu(\omega) \qquad \forall x \in T.
\end{align*} Conversely,  for every $g \in L^1_{\loc}(\partial T, \nu)$, the function $f$ defined by
\begin{align*}
    f(x)=\frac{1}{m_\nu(x)}\int_{\partial T_x} g(\omega) \ \mathrm{d}\nu(\omega), \qquad \forall x \in T,
\end{align*} is harmonic on $T$.
\end{proposition}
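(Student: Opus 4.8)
The plan is to treat the two implications separately, since the converse direction is essentially a computation and the forward direction is where the hypotheses are used.

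For the converse, suppose $g \in L^1_{\loc}(\partial T,\nu)$ and set $f(x) = m_\nu(x)^{-1}\int_{\partial T_x} g\,\mathrm{d}\nu$. Fix $x \in T$. Since the sectors $\{\partial T_y : y \in s(x)\}$ are pairwise disjoint and their union is $\partial T_x$ (each $\omega \le x$ lies below exactly one successor of $x$), additivity of the integral gives $\int_{\partial T_x} g\,\mathrm{d}\nu = \sum_{y \in s(x)} \int_{\partial T_y} g\,\mathrm{d}\nu = \sum_{y \in s(x)} m_\nu(y) f(y)$. Dividing by $m_\nu(x)$ yields $f(x) = \sum_{y\in s(x)} f(y) m_\nu(y)/m_\nu(x)$, i.e.\ $\Delta f(x) = 0$. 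The local integrability of $g$ guarantees each integral is finite, so $f$ is well-defined.

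For the direct statement, let $f$ be bounded and harmonic with continuous extension to $\overline T$, still denoted $f$. Fix $x \in T$ and work inside the sector $T_x$. From the iterated harmonicity identity \eqref{harmonic} we have, for every $n$,
\begin{align*}
f(x) = \frac{1}{m_\nu(x)} \sum_{y \in s_n(x)} f(y) m_\nu(y).
\end{align*}
I would now interpret the right-hand side as an integral against $\nu$. Associate to each $y \in s_n(x)$ the sector $\partial T_y$; these sectors partition $\partial T_x$ as $y$ ranges over $s_n(x)$, and $\nu(\partial T_y) = m_\nu(y)$. Defining the step function $g_n(\omega) = f(y)$ for $\omega \in \partial T_y$, $y \in s_n(x)$, the identity becomes $f(x) = m_\nu(x)^{-1} \int_{\partial T_x} g_n\,\mathrm{d}\nu$. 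The boundedness of $f$ gives $\|g_n\|_\infty \le \|f\|_\infty$ uniformly. To pass to the limit I would show $g_n(\omega) \to f(\omega)$ for a.e.\ $\omega \in \partial T_x$: given such $\omega$ and $\varepsilon > 0$, continuity of the extension provides $\delta$ with $|f(z) - f(\omega)| < \varepsilon$ whenever $\rho(z,\omega) < \delta$; for $n$ large enough the unique $y \in s_n(x)$ with $\omega \le y$ satisfies $\ell(y) \to -\infty$, hence $\rho(y,\omega) = e^{\ell(y)} < \delta$ (note $y$ lies on $(\omega,\omega_*)$, so $y \wedge \omega = y$), giving $|g_n(\omega) - f(\omega)| = |f(y) - f(\omega)| < \varepsilon$. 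By dominated convergence (the dominating function $\|f\|_\infty$ is $\nu$-integrable on $\partial T_x$ since $\nu(\partial T_x) = m_\nu(x) < \infty$), $\int_{\partial T_x} g_n \,\mathrm{d}\nu \to \int_{\partial T_x} f\,\mathrm{d}\nu$, and the desired formula follows.

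The main obstacle is the a.e.\ convergence step: one must be careful that the continuous-extension hypothesis is exactly what licenses $g_n(\omega) \to f(\omega)$ off a $\nu$-null set, and that the geometry is handled correctly—namely that for fixed $\omega$ the nested vertices $\Phi(\omega,j)$ (with $j = \ell(x), \ell(x)-1, \dots$) are precisely the representatives picked out in $s_n(x)$, so that $\rho$ between the representative and $\omega$ is governed by the level, which tends to $-\infty$. Everything else is bookkeeping with the partition property of sectors and the finiteness of $\nu$ on balls.
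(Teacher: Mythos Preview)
Your proposal is correct and follows essentially the same line as the paper's proof: both directions hinge on the partition $\partial T_x=\bigcup_{y\in s_n(x)}\partial T_y$, the iterated harmonicity identity \eqref{harmonic}, and Dominated Convergence using $\|f\|_\infty$ as dominator, with the Gromov distance $\rho(y,\omega)=e^{\ell(y)}\to 0$ furnishing the pointwise a.e.\ convergence of the step functions. The only cosmetic difference is that for the converse you argue directly via additivity of the integral over the partition $\{\partial T_y:y\in s(x)\}$, whereas the paper phrases the same computation as $\Delta K(\cdot,\omega)(x)=0$ for the Poisson kernel $K(x,\omega)=\chi_{\partial T_x}(\omega)/m_\nu(x)$; these are the same calculation in different clothing.
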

\begin{proof}
 We first observe that $\|f\|_{L^\infty{(\partial T,\nu)}} \le \|f\|_{L^\infty(T,m_\nu)}.$ Indeed, for a.e. $\omega \in \partial T$ and $\varepsilon>0$ there is a $x_\varepsilon \in (\omega,\omega_*)$ such that
\begin{align*}
    |f(\omega)|\le |f(x_\varepsilon)|+\varepsilon \le \|f\|_{L^\infty(T,m_\nu)}+\varepsilon.
\end{align*} This implies that for every $x \in T$, $f$ is integrable on $\partial T_x$  and  thus
\begin{align}\label{intlebesgue}
    \int_{\partial T_x} f(\omega) \ \mathrm{d}\nu(\omega)
\end{align} makes sense. Moreover, by \eqref{harmonic}, for every $n \in \mathbb N$ 
\begin{align}\label{riemsum}
    m_\nu(x)f(x)= \sum_{y \in s_n(x)}f(y)m_\nu(y)=\sum_{y \in s_n(x)}f(y)\nu(\partial T_y).  
\end{align} Define the sequence of simple functions $f_{n,x}(\omega)=f_n(\omega)=\sum_{y \in s_n(x)}f(y)\chi_{\partial T_y}(\omega)$. It is clear that 
\begin{align}\label{numero1}
    \int_{\partial T}f_n(\omega)  \ \mathrm{d}\nu(\omega)=\sum_{y \in s_n(x)}f(y)\nu(\partial T_y)=f(x)m_\nu(x) \qquad\forall n \in \mathbb N,
\end{align} by \eqref{riemsum}. We notice that $\{\partial T_y\}_{y \in s_{n}(x)}$ is a partition of $\partial T_x$, hence  for every $\omega \in \partial T_x$ we have that $f_n(\omega)=f(\omega_n)$ where $\omega_n$ is the unique vertex in $s_n(x)$ such that $\omega \le \omega_n$. Moreover, $$\rho(\omega_n,\omega)=e^{\ell(\omega_n)}=e^{\ell(x)-n}\rightarrow 0 \qquad \hbox{as $n$ tends to $\infty$}.$$
Since $f$ admits a continuous extension, we deduce that 
\begin{align*}
    \lim_{n \to \infty}f_n(\omega)=f(\omega) \qquad a.e. \ \omega \in \partial T_x.
\end{align*}
Observe that $|f_n(\omega)|\le \|f\|_\infty \chi_{\partial T_x}(\omega)$, 
so we conclude by the Lebesgue Dominated Convergence Theorem that
\begin{align*}
    f(x)m_\nu(x)=\lim_{n \to \infty} \int_{\partial T_x}f_n(\omega)=\int_{\partial T_x} f(\omega) \ \mathrm{d}\nu(\omega).
\end{align*}
It is easy to see that the converse  holds: if $f$ is such that 
\begin{align*}
    f(x)=\frac{1}{m_\nu(x)}\int_{\partial T_x}g(\omega) \ \mathrm{d}\nu(\omega),
\end{align*} for some $g \in L^1_{\mathrm{loc}}(\partial T, \nu)$,  then
\begin{align*}
    \Delta f(x)=\int_{\partial T} \Delta K(x,\omega) g(\omega) \ \mathrm{d}\nu(\omega)=0, 
\end{align*} where $K(x,\omega):=\frac{\chi_{\partial T_x}(\omega)}{m_\nu(x)}$. Indeed, it is readily seen that for every $x \in T$ and $\omega \in \partial T$ 
\begin{align*}
    \Delta K(\cdot,\omega)(x)&=\frac{1}{m_\nu(x)}\chi_{\partial T_x}(\omega)-\sum_{y \in s(x)}\frac{m_\nu(y) \chi_{\partial T_y}(\omega)}{m_\nu(x)m_\nu(y)} \\ 
    &=\begin{cases} 0
        &\text{if $\omega \not \in \partial T_x$}, \\ 
        \frac{1}{m_\nu(x)}-\frac{1}{m_\nu(x)}\times 1 &\text{if $\omega \in \partial T_x$} 
    \end{cases} \\ 
    &=0.
\end{align*}
\end{proof} For the remaining of the paper, we write $\cP$ for the integral operator defined by 
\begin{align}\label{def:K}
    \cP f(x)=\int_{\partial T}P(x,\omega) f(\omega) \ \mathrm{d}\nu(\omega),
\end{align} where $P(x,\omega)=\displaystyle \frac{\chi_{\partial T_x}(\omega)}{m_\nu(x)}$. We will refer to $\cP$ as the {\it Poisson integral operator}. \\ 
In Theorem \ref{th: charactPoisson}, we will characterise harmonic functions that are the Poisson integral of a function in $L^p(\partial T,\nu)$. For a different approach on the Poisson representation of 
 harmonic functions on trees, we refer the reader to \cite{PW} and the references therein.
In order to state our result, we need to introduce a suitable Hardy space.
\begin{definition} For every $p \ge 1$,
    we say that a harmonic function $f$ on $T$ belongs to $H^p$ if 
    \begin{align*}
\|f\|_{H^p}^p&:=\sup_{k \in \mathbb Z} \sum_{\ell(x)=k}| f(x)|^pm_\nu(x)<\infty \!\!\!\!\!&&\hbox{if $p<\infty$}, \\ 
\|f\|_{H^\infty}&:=\|f\|_{L^\infty(T)}<\infty &&\hbox{if $p=\infty$}.
    \end{align*} 
\end{definition}
$H^p$ can be thought of as the analogue of the Hardy spaces on the upper half-plane (see for example \cite[Chapter 2]{Garnett}). \\ 
 Observe that if $g \in L^p(\partial T,\nu)$ then by Jensen's inequality
\begin{align}\label{necessary}
    \|\cP g\|_{H^p}^p=\sup_{k \in \mathbb Z} \sum_{\ell(x)=k}\bigg|\frac{1}{m_\nu(x)}\int_{\partial T_x} g(\omega) \ \mathrm{d}\nu(\omega)\bigg|^pm_\nu(x) \le \|g\|_{L^p(\partial T)}^p,
\end{align} because $\{\partial T_x\}_{\ell(x)=k}$ is a partition of $\partial T$.
\begin{theorem}\label{th: charactPoisson}Let $f$ be a harmonic function on $T$ and $p>1$. Then, $f$ is the Poisson integral of a $L^p(\partial T,\nu)$ function if and only if $f \in H^p$.
\end{theorem}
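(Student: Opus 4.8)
The forward direction is immediate: if $f=\cP g$ with $g\in L^p(\partial T,\nu)$, then \eqref{necessary} already gives $\|f\|_{H^p}\le \|g\|_{L^p(\partial T)}<\infty$, so $f\in H^p$. The substance is the converse, and the plan is to produce the boundary function $g$ as a weak-$\ast$ limit of the ``slices'' of $f$ and then identify $f$ with its Poisson integral.

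First I would set up the slice functions. For each $k\in\bbZ$ define $g_k$ on $\partial T$ by $g_k(\omega)=f(\Phi(\omega,k))$, i.e. $g_k=\sum_{\ell(x)=k} f(x)\chi_{\partial T_x}$, a simple function constant on each ball $\partial T_x$ with $\ell(x)=k$. By definition of $H^p$, $\|g_k\|_{L^p(\partial T,\nu)}^p=\sum_{\ell(x)=k}|f(x)|^p m_\nu(x)\le \|f\|_{H^p}^p$, so $\{g_k\}_{k\in\bbZ}$ is a bounded sequence in $L^p(\partial T,\nu)$, and since $p>1$ the space is reflexive; hence there is a subsequence $g_{k_j}$ (with $k_j\to\infty$) converging weakly to some $g\in L^p(\partial T,\nu)$ with $\|g\|_{L^p}\le\|f\|_{H^p}$. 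The key structural observation is that the harmonicity identity \eqref{harmonic}, rewritten as $f(x)m_\nu(x)=\int_{\partial T}g_k(\omega)\chi_{\partial T_x}(\omega)\,\mathrm d\nu(\omega)$ for every $k\ge \ell(x)$, says exactly that the slices are compatible: for fixed $x$, the quantity $\int_{\partial T_x}g_k\,\mathrm d\nu$ is independent of $k$ once $k\ge\ell(x)$ and equals $f(x)m_\nu(x)$. Now test weak convergence against the bounded function $\chi_{\partial T_x}\in L^{p'}(\partial T,\nu)$ (finite measure on balls makes this legitimate): $\int_{\partial T_x}g\,\mathrm d\nu=\lim_j\int_{\partial T_x}g_{k_j}\,\mathrm d\nu=f(x)m_\nu(x)$, which is precisely $f(x)=\frac{1}{m_\nu(x)}\int_{\partial T_x}g\,\mathrm d\nu=\cP g(x)$. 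So $f$ is the Poisson integral of $g\in L^p(\partial T,\nu)$, completing the proof.

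I would double-check two routine points. One is that $\chi_{\partial T_x}$ really lies in $L^{p'}(\partial T,\nu)$: this holds because $\nu$ is assumed finite and positive on balls, so $\nu(\partial T_x)=m_\nu(x)<\infty$. The other is that the compatibility of slices follows from \eqref{harmonic}: for $k\ge\ell(x)$, $\{\partial T_y : y\in s_{k-\ell(x)}(x)\}$ partitions $\partial T_x$, and $g_k$ takes the value $f(y)$ on $\partial T_y$, so $\int_{\partial T_x}g_k\,\mathrm d\nu=\sum_{y\in s_{k-\ell(x)}(x)}f(y)m_\nu(y)=f(x)m_\nu(x)$ by \eqref{harmonic}.

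**Main obstacle.** The only real subtlety is that weak-$L^1$ compactness fails at $p=1$, which is why the hypothesis $p>1$ is used — reflexivity of $L^p$ is what delivers the limiting boundary function; so the argument as stated genuinely needs $p>1$, consistent with the statement. A secondary point to be careful about is that I should extract the subsequence along $k_j\to+\infty$ (levels increasing toward the root $\omega_*$), so that for each fixed $x$ eventually $k_j\ge\ell(x)$ and the compatibility identity applies; this is harmless since $H^p$ controls all levels uniformly.
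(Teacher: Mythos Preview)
Your approach is correct in spirit and is in fact a cleaner packaging of the paper's own argument: the paper localises to each $\partial T_x$ with $\ell(x)=0$, builds truncated harmonic extensions $u_x^n$, applies Banach--Alaoglu on each $L^p(\partial T_x,\nu)$, and then patches the local weak limits into a global $g$; your global slice functions $g_k=\sum_{\ell(x)=k} f(x)\chi_{\partial T_x}$ do all of this in one stroke and avoid the patching step.

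There is, however, a direction error that you must fix. In this paper successors have \emph{lower} level (i.e.\ $\ell(s(x))=\ell(x)-1$), so $s_n(x)$ sits at level $\ell(x)-n$ and the boundary $\partial T$ is reached as $k\to -\infty$, not $k\to +\infty$. Consequently the compatibility identity reads: for $k\le \ell(x)$, the family $\{\partial T_y : y\in s_{\ell(x)-k}(x)\}$ partitions $\partial T_x$, $g_k$ equals $f(y)$ on $\partial T_y$, and hence $\int_{\partial T_x}g_k\,\mathrm d\nu = \sum_{y\in s_{\ell(x)-k}(x)} f(y)m_\nu(y)=f(x)m_\nu(x)$ by \eqref{harmonic}. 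You therefore need to extract the subsequence with $k_j\to -\infty$, so that eventually $k_j\le \ell(x)$ for each fixed $x$. With this correction the rest of your argument goes through unchanged.

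One further remark: the statement covers $p=\infty$ as well, where $L^p$ is not reflexive. There you should invoke weak-$*$ compactness of the unit ball of $L^\infty(\partial T,\nu)=(L^1(\partial T,\nu))^*$ (legitimate since $\nu$ is $\sigma$-finite: $\partial T=\bigcup_{\ell(x)=0}\partial T_x$ with each piece of finite measure), and test against $\chi_{\partial T_x}\in L^1$. The paper handles $p=\infty$ the same way, calling it ``analogous with obvious modifications''.
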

\begin{proof}
    The necessary condition follows by \eqref{necessary}. For the other direction, assume that $f \in H^p$ for some $p>1$. 
    We provide the details of the proof for $p<\infty$. The proof for $p=\infty$ is analogous with obvious modifications.
    We aim to show that $f=\cP g$ for a suitable $g \in L^p(\partial T,\nu)$.
   We first observe that it suffices to prove that there exists $g \in L^p(\partial T,\nu)$ such that  \begin{align}\label{asterisco}
    \cP g(x)=f(x) \qquad \forall x \ : \ \ell(x) \le 0.  
   \end{align} Indeed, assume that \eqref{asterisco} holds. If $z$ has level $>0$ then by \eqref{harmonic} we see that  
   \begin{align}
     \nonumber  f(z)&=\frac{1}{m_\nu(z)}\sum_{y \in s_{\ell(z)}(z)}f(y)m_\nu(y)\\ \nonumber &=\frac{1}{m_\nu(z)}\sum_{y \in s_{\ell(z)}(z)}\frac{m_\nu(y)}{m_\nu(y)}\int_{\partial T_y} g(\omega) \ \mathrm{d}\nu(\omega)\\ \label{asterisco2} &=\int_{T} P(z,\omega) g(\omega) \ \mathrm{d}\nu(\omega),
   \end{align} which concludes the proof. \\ 
  We prove \eqref{asterisco}. Let $x\in T$ be such that $\ell(x)=0$. For every $n  \in \mathbb N$, set $$T^n_x=T_x \cap \{y \ : \ \ell(y) \ge -n\}$$ and consider the sequence of functions $u_x^n$ defined by 
    \begin{align*}
u_x^n(y)=\begin{cases}
            f(y) &\text{if $y \in T_x^n$,}\\ 
            f(x) &\text{if $y \not \in T_x$,} \\ 
            f(p^{-n-\ell(y)}(y)) &\text{if $y \in T_x \setminus T_x^n$.}
        \end{cases} 
        \end{align*}
        We remark that if $y \in T_x \setminus T^n_x$, $p^{-n-\ell(y)}(y)$ is the closest vertex in $T^n_x$ to $y$. Note that $u^n_x$ coincides with $f$ on $T_x^n$ and for every $y \in ({T_x^n})^c$ one has $u^n_x(y)= u^n_x(z)$ for every $z \sim y.$ 
        We deduce that $u_x^n$ is harmonic on $T$,  constant on $T_y$ for every $y$ such that $\ell(y)\le -n$, and $$\sup_{y \in T}|u_x^n(y)| \le \max_{z \in T^n_x} |f(z)|.$$ 
        Thus, by Proposition \ref{poisson:sum} we conclude that for every $y \in T$  $$u_x^n(y)=\int_{\partial T}P(y,\omega) u_x^n(\omega) \ \mathrm{d}\nu(\omega),$$ where $u_x^n$ also denotes  the continuous extension of $u_x^n$ on $\partial T$, which exists because $u_x^n$ is constant on $T_y$ for every $y$ with level $\le-n$. In particular, for every $y \in T_x$ and $n \ge -\ell(y)$, namely, for every $n$ such that $y \in T_x^n$, we have that
\begin{align}\label{identityf}f(y)=u_x^n(y)=\int_{\partial T} P(y,\omega) u_x^n(\omega) \  \mathrm{d}\nu(\omega)=\int_{\partial T_x} P(y,\omega) u_x^n(\omega) \  \mathrm{d}\nu(\omega).\end{align}
        Observe that the sequence $\{u^n_x\}_{n}$ is bounded on $L^p(\partial T_x,\nu)$ because 
        \begin{align}\label{boundun}
            \int_{\partial T_x} |u^n_x(\omega)|^p \ \mathrm{d}\nu(\omega)=\sum_{z \in s_n(x)} |f(z)|^p m_\nu(z)\le \|f\|_{H^p}^p \qquad \forall n \in\mathbb N,
\end{align} where we have used that for every $z\in s_n(x)$, $u_x^n=f(z)$ on $\partial T_z$ and $\partial T_x=\cup_{z \in s_n(x)}\partial T_z$. Next, the Banach-Alaoglu Theorem implies that $\{u_x^n\}_n$ admits a subsequence $\{u^{n_k}_x\}_k$ that weakly converges in $L^p(\partial T_x, \nu)$ to a function $u_x$. It follows by \eqref{identityf} that for every $y \in T_x$
        \begin{align*}
            f(y)=\lim_{k\to \infty} u_x^{n_k}(y)=\lim_{k \to \infty}\int_{\partial T_x} P(y,\omega)u_x^{n_k}(\omega) \ \mathrm{d}\nu(\omega)=\int_{\partial T_x} P(y,\omega)u_x(\omega) \ \mathrm{d}\nu(\omega),
        \end{align*} because $P(y,\cdot) \in L^q(\partial T_x,\nu)$ for every $y \in T_x$, where $q=p/(p-1)$. We conclude that for every $y \in T_x$,  $f(y)=\cP u_x(y)$. \\ We  set $$g(\omega)=u_x(\omega) \qquad \forall \omega \in \partial T_x.$$ Then, by the arbitrariness of $x$ in $\{y \in T \ : \ \ell(y)=0\}$ and \eqref{asterisco2}, we conclude  $\cP g=f$ on $T$.  \\ It remains to prove that $g \in L^p(\partial T, \nu)$. We observe that for every $n \in \mathbb N$ and $x \in T$ with $\ell(x)=0$ $$\int_{\partial T_x}|u^n_x(\omega)|^p \ \mathrm{d}\nu(\omega)=\sum_{z \in s_n(x)} |f(z)|^p m_\nu(z) \le \sum_{z \in s_{n+1}(x)} |f(z)|^p m_\nu(z),$$
        where the last inequality follows from the fact that $f$ is harmonic for every $z \in s_n(x)$ and an application of Jensen's inequality. It follows that $n \mapsto \int_{\partial T_x}|u^n_x(\omega)|^p \ \mathrm{d}\nu(\omega) $
        is increasing and bounded because $f \in H^p$. Then, on the one hand
        \begin{align}\label{limitexists}
            \lim_{n \to \infty}\int_{\partial T_x}|u^n_x(\omega)|^p \ \mathrm{d}\nu(\omega)=\lim_{n \to \infty}\sum_{z \in s_n(x)}|f(z)|^p m_\nu(z)=c \le \|f\|_{H^p}^p<\infty.
        \end{align} On the other hand it is known that the $p$-norm is weakly lower semicontinuous, thus 
        \begin{align}\label{wlsc}
        \int_{\partial T_x} |u_x(\omega)|^p \ \mathrm{d}\nu(\omega) \le     \liminf_{k \to \infty} \int_{\partial T_x}|u^{n_k}_x(\omega)|^p\ \mathrm{d}\nu(\omega)=\lim_{k \to \infty} \int_{\partial T_x}|u^{k}_x(\omega)|^p\ \mathrm{d}\nu(\omega),
        \end{align}
        where in the second equality we have used \eqref{limitexists}.
We conclude that
\begin{align*}
           \|g\|_{L^p(\partial T,\nu)}^p&=\sum_{x : \ell(x)=0} \int_{\partial T_x} |u_x(\omega)|^p \ \mathrm{d}\nu(\omega) \\&=\lim_{n \to \infty}\sum_{x : \ell(x)=0, d(x,o)<n} \int_{\partial T_x} |u_x(\omega)|^p \ \mathrm{d}\nu(\omega) 
           \\&\le \lim_{n \to \infty}\sum_{x : \ell(x)=0, d(x,o)<n} \lim_{k \to \infty}\int_{\partial T_x} |u_x^{k}(\omega)|^p \ \mathrm{d}\nu(\omega) \\&= \lim_{n \to \infty} \lim_{k \to \infty}\sum_{x : \ell(x)=0, d(x,o)<n} \sum_{z \in s_k(x)}|f(z)|^p m_\nu(z) \\ 
           &\le \lim_{n \to \infty}\lim_{k \to \infty}\sum_{x : \ell(x)=-k}  |f(z)|^p m_\nu(z) \\ 
           &\le \|f\|_{H^p}^p.
           \end{align*}
\end{proof} 
 We are now ready to define Carleson measures. 
\begin{definition}  We say that a positive measure $\sigma$ on $T$ is a Carleson measure if there exists a constant $C>0$ such that for every $x \in T$ $$\sigma(T_x)\le C \nu(\partial T_x).$$
\end{definition}  
Like in the Euclidean case, we show that Carleson measures are related to the boundedness of $\cP$ between suitable $L^p$ spaces. To accomplish this, we first discuss some properties of the Poisson integral operator. 
\\ We define the maximal operator $\mathcal{U}$ by
\begin{align*}
\mathcal{U}f(\omega)=\sup_{x \in (\omega,\omega_*)}|f(x)|, \qquad \forall \omega \in \partial T,
\end{align*} 
 where $f$ is a function on $T$.
\begin{proposition}Let $\Phi$ be as in Definition \ref{def:Phi}. The following hold:
\begin{itemize}
\item[$i)$] for all $f\in L^1_{\mathrm{loc}}(\partial T,\nu)$ \begin{align}\label{f:UK}
\cU \cP f (\omega)\le \cM f(\omega) \qquad \forall \omega \in \partial T,
\end{align} 
 and thus in particular
\begin{align}
\cP f(\Phi(\omega,j)) \le \cM f(\omega) \qquad \forall j \in \mathbb Z,  \omega \in \partial T; \end{align} \item[$ii)$] for a.e. $\omega \in \partial T$, $p\in (1,\infty)$ and $f\in L^p(\partial T, \nu)$  
\begin{align*}
   \lim_{j \to -\infty} \cP f(\Phi(\omega,j))=f(\omega)
\end{align*} 
and
\begin{align}\label{f:DCT}
    \lim_{j \to -\infty} \int_{\partial T} |\cP f(\Phi(\omega,j))-f(\omega)|^p \ d\nu(\omega)=0;
\end{align}
\item[$iii)$] for every $x \in T$ 
\begin{align}\label{intK=1}
    \int_{\partial T}P(x,\omega) \ \mathrm{d}\nu(\omega)=1.
\end{align}
\end{itemize}
\end{proposition}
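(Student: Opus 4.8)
The plan is to treat the three items essentially independently, since each follows from a short computation using the covering structure of the balls $\{\partial T_x\}_{x\in T}$ together with the definitions. I would begin with $iii)$ as it is the simplest and is used implicitly in the others: for fixed $x\in T$, by definition $P(x,\omega)=\chi_{\partial T_x}(\omega)/m_\nu(x)$, so $\int_{\partial T}P(x,\omega)\,\mathrm{d}\nu(\omega)=\nu(\partial T_x)/m_\nu(x)=1$ because $m_\nu(x)=\nu(\partial T_x)$ by the very definition of the induced flow measure. Next, for $i)$, fix $\omega\in\partial T$. For any $x\in T$ we either have $\omega\in\partial T_x$ or not; and $x\in(\omega,\omega_*)$ is exactly the condition $\omega\le x$, i.e. $\omega\in\partial T_x$. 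Hence for $x\in(\omega,\omega_*)$,
\[
|\cP f(x)|=\Bigl|\frac{1}{m_\nu(x)}\int_{\partial T_x}f\,\mathrm{d}\nu\Bigr|\le \frac{1}{m_\nu(x)}\int_{\partial T_x}|f|\,\mathrm{d}\nu\le \cM f(\omega),
\]
using the second expression for $\cM f(\omega)$ in \eqref{HL:DEF} (the supremum over $x\in(\omega_0,\omega_*)$). Taking the supremum over $x\in(\omega,\omega_*)$ gives $\cU\cP f(\omega)\le\cM f(\omega)$; the pointwise bound $\cP f(\Phi(\omega,j))\le\cM f(\omega)$ is the special case $x=\Phi(\omega,j)$, which lies in $(\omega,\omega_*)$ by Definition \ref{def:Phi}.

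For $ii)$, the convergence $\cP f(\Phi(\omega,j))\to f(\omega)$ a.e. as $j\to-\infty$ is a Lebesgue-differentiation statement on the metric measure space $(\partial T,\rho,\nu)$: the sets $\partial T_{\Phi(\omega,j)}$ are precisely the balls $B_\rho(\omega,e^{j})$ shrinking to $\omega$ as $j\to-\infty$, and $\cP f(\Phi(\omega,j))$ is the average of $f$ over this ball. The standard argument applies: for $f$ continuous (or, more to the point, for $f$ in a dense subclass of $L^p$ on which the averages converge everywhere — e.g. finite linear combinations of indicators $\chi_{\partial T_y}$, for which the average stabilizes once $e^{j}$ is small enough) the convergence is immediate, and one upgrades to all $f\in L^p(\partial T,\nu)$ by the standard maximal-function argument using the weak-$(1,1)$ bound from Proposition \ref{p:HL} to control the bad set. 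Once the a.e. convergence is in hand, \eqref{f:DCT} follows from the Dominated Convergence Theorem: by $i)$, $|\cP f(\Phi(\omega,j))-f(\omega)|^p\le (\cM f(\omega)+|f(\omega)|)^p\le 2^{p-1}(\cM f(\omega)^p+|f(\omega)|^p)$, which is $\nu$-integrable on $\partial T$ since $\cM$ is bounded on $L^p(\partial T,\nu)$ for $p>1$ (again Proposition \ref{p:HL}) and $f\in L^p$.

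The only point requiring genuine care — and thus the main obstacle — is the a.e.\ convergence in $ii)$. Everything else is a one-line unwinding of definitions. For the differentiation theorem I would either cite the general fact that a metric measure space whose Hardy–Littlewood maximal operator is weak-$(1,1)$ admits a Lebesgue differentiation theorem along the relevant family of balls, or give the self-contained three-line argument: writing $E_\lambda=\{\omega:\limsup_{j\to-\infty}|\cP f(\Phi(\omega,j))-f(\omega)|>\lambda\}$, approximate $f$ in $L^p$ by $h$ from the dense class, split $f=h+(f-h)$, use that the statement holds for $h$ exactly, bound the $h-f$ contribution by $\cM(f-h)+|f-h|$, and apply Chebyshev together with the maximal inequality to make $\nu(E_\lambda)$ arbitrarily small, hence zero. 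I would present this last argument explicitly since the family of balls here is the special totally-disconnected family $\{\partial T_x\}$ and the reader may appreciate seeing that no deep covering lemma beyond the trivial nesting property of Proposition \ref{p:HL} is needed.
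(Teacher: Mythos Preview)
Your proposal is correct and follows essentially the same approach as the paper: $i)$ and $iii)$ are unwound directly from the definitions exactly as in the paper, and for $ii)$ the paper likewise invokes the Lebesgue differentiation theorem (justified by local compactness of $\partial T$ and the weak-$(1,1)$ bound of Proposition~\ref{p:HL}) together with Dominated Convergence using $\cM f\in L^p$. The only difference is that you offer to spell out the standard density-plus-maximal-function argument for the differentiation theorem, whereas the paper simply cites it; this is a matter of presentation, not a different route.
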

\begin{proof}
    Let $f \in L^1_{\mathrm{loc}}(\partial T,\nu)$. Then, by \eqref{HL:DEF}
    \begin{align*}
        \cU \cP f (\omega)\le \sup_{x \in (\omega,\omega_*)}\frac{1}{m_\nu(x)}\int_{\partial T_x}|f(\omega)| \ \mathrm{d}\nu(\omega)= \cM f(\omega) \qquad\forall \omega \in \partial T,
    \end{align*} which is $i)$. In order to prove $ii)$, assume that $f \in L^p(\partial T,\nu)$ for some $p \in (1,\infty)$. Since $\partial T$ is a locally compact space on which $\cM$ is of weak type $(1,1)$, the Lebesgue differentiation Theorem holds. Thus for a.e. $\omega_0 \in \partial T$,
\begin{align}\label{diffleb}
   \lim_{j \to -\infty} \cP f(\Phi(\omega_0,j))=\lim_{j \to - \infty} \frac{1}{\nu(\partial T_{\Phi(\omega_0,j)})}\int_{\partial T_{\Phi(\omega_0,j)}} f(\omega) \ d\nu(\omega)=f(\omega_0).
\end{align}  By $i)$, the fact that $\cM f \in L^p(\partial T,\nu)$, \eqref{diffleb} and the Dominated Converge Theorem, we have that \eqref{f:DCT} holds. Finally, $iii)$ follows from a straightforward computation.
\end{proof} We remark that since $P(x,\omega) \ge 0$ for every $(x,\omega) \in T \times \partial T$, $iii)$ in the above proposition implies that for every positive  measure $\sigma$ on $T$,
\begin{align}\label{remarkK}
    \|\cP f\|_{L^\infty(T,\sigma)} \le  \|f\|_{L^\infty(\partial T,\nu)}, \qquad \forall f \in L^\infty(\partial T,\nu).
\end{align}
\begin{theorem}\label{th: Poisson}

Let $\nu$ be a positive measure on $\partial T$. The following facts are equivalent  \begin{itemize}
    \item[$i)$] $\sigma$ is a Carleson measure; 
      \item[$ii)$] there exists a $C>0$ such that for every $p>1$ and  $f \in L^p(\partial T,\nu)$ $$\|\cP f\|_{L^p(T,\sigma)}\le C\| f\|_{L^p(\partial T,\nu)},$$ and $$\|\cP f\|_{L^{1,\infty}(T,\sigma)}\le C \|f\|_{L^1(\partial T,\nu)};$$
    \item[$iii)$] there exists  $C>0$ such that for every $p>1$ and $f \in L^p(\partial T,\nu)$ $$\|\cP f\|_{L^p(T,\sigma)}\le C\|\cP f\|_{H^p}.$$
\end{itemize}   
\end{theorem}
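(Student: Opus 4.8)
The plan is to prove the chain of implications $i) \Rightarrow ii) \Rightarrow iii) \Rightarrow i)$, treating the Carleson condition as a level-set/distributional estimate throughout.

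\emph{Step 1: $i) \Rightarrow ii)$.} Assume $\sigma(T_x)\le C\nu(\partial T_x)$ for all $x\in T$. The key observation is the pointwise bound $\cP f(x)\le \cM f(\omega)$ for every $\omega\in\partial T_x$, which is part $i)$ of the previous proposition. From this I would deduce a level-set containment: for $\lambda>0$, the set $\{x\in T:|\cP f(x)|>\lambda\}$ is a union of sectors $T_{x}$ whose boundaries $\partial T_x$ are contained in $\{\cM f>\lambda\}$ — more precisely, if $|\cP f(x)|>\lambda$ then every $\omega\le x$ satisfies $\cM f(\omega)>\lambda$, so $\partial T_x\subset E_\lambda:=\{\cM f>\lambda\}$. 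Writing $E_\lambda$ as a disjoint union of maximal balls $\partial T_{x_j}$ (using the nesting property of sectors), the part of $\{|\cP f|>\lambda\}$ lying over each $\partial T_{x_j}$ is contained in $T_{x_j}$, so
\begin{align*}
\sigma(\{|\cP f|>\lambda\})\le \sum_j \sigma(T_{x_j})\le C\sum_j \nu(\partial T_{x_j})=C\,\nu(E_\lambda)=C\,\nu(\{\cM f>\lambda\}).
\end{align*}
The weak type $(1,1)$ then follows from Proposition \ref{p:HL}, and the strong $L^p$ bound for $p>1$ follows either by integrating this distributional inequality against $p\lambda^{p-1}\,d\lambda$ and invoking the $L^p$-boundedness of $\cM$, or by Marcinkiewicz interpolation between the weak $(1,1)$ bound and the trivial $L^\infty$ bound \eqref{remarkK}. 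The uniformity of the constant $C$ in $p$ comes from interpolation with fixed endpoints.

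\emph{Step 2: $ii) \Rightarrow iii)$.} This is essentially immediate from Theorem \ref{th: charactPoisson} together with the necessity estimate \eqref{necessary}. If $f\in H^p$ with $p>1$, Theorem \ref{th: charactPoisson} produces $g\in L^p(\partial T,\nu)$ with $f=\cP g$ and, tracing through its proof (the final chain of inequalities), $\|g\|_{L^p(\partial T,\nu)}\le\|f\|_{H^p}$. Then $ii)$ gives $\|\cP f\|_{L^p(T,\sigma)}=\|\cP g\|_{L^p(T,\sigma)}\le C\|g\|_{L^p(\partial T,\nu)}\le C\|f\|_{H^p}$, which is $iii)$. (One should state $iii)$ only for $f\in H^p$; for $f\notin H^p$ the right-hand side is infinite and the inequality is vacuous.)

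\emph{Step 3: $iii) \Rightarrow i)$.} This is the part I expect to require the most care, and the natural route is to test $iii)$ on indicator functions of sectors. Fix $x_0\in T$ and consider $f=\cP(\chi_{\partial T_{x_0}})$, i.e. $f(y)=\nu(\partial T_y\cap\partial T_{x_0})/m_\nu(y)$. For $y\le x_0$ (equivalently $y\in T_{x_0}$) one has $\partial T_y\subset\partial T_{x_0}$, so $f(y)=1$; hence $\cP f = 1$ on all of $T_{x_0}$ and therefore $\|\cP f\|_{L^p(T,\sigma)}^p\ge \sigma(T_{x_0})$. On the other hand, since $f=\cP g$ with $g=\chi_{\partial T_{x_0}}$, the necessity bound \eqref{necessary} gives $\|f\|_{H^p}^p\le\|g\|_{L^p(\partial T,\nu)}^p=\nu(\partial T_{x_0})$. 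Applying $iii)$ yields $\sigma(T_{x_0})\le\|\cP f\|_{L^p(T,\sigma)}^p\le C^p\|f\|_{H^p}^p\le C^p\,\nu(\partial T_{x_0})$, which is exactly the Carleson condition with constant $C^p$. The only subtlety is to make sure $f=\cP(\chi_{\partial T_{x_0}})$ is genuinely a harmonic function lying in $H^p$ so that $iii)$ applies — but $\chi_{\partial T_{x_0}}\in L^\infty\subset L^p_{\loc}$, the Proposition on Poisson representation guarantees harmonicity, and \eqref{necessary} guarantees finiteness of the $H^p$ norm, so this is fine.

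The main obstacle is keeping the constant $C$ genuinely independent of $p$ in Step 1; this is handled cleanly by fixing the two endpoint estimates (weak $(1,1)$ with constant $C$ from the Carleson bound and Proposition \ref{p:HL}, and $L^\infty$ with constant $1$ from \eqref{remarkK}) and letting Marcinkiewicz interpolation supply all intermediate $L^p$ bounds with a constant controlled uniformly in $p$ bounded away from $1$ and $\infty$; one should remark that the implicit constant in the statement is allowed to absorb the usual $p/(p-1)$-type factor, or simply note that the extremal testing in Step 3 already forces $C\ge 1$ and the interpolation constant near the endpoints is harmless for the equivalence.
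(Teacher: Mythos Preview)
Your proposal is correct and follows essentially the same route as the paper: the same distributional inequality $\sigma(\{|\cP f|>\lambda\})\le C\,\nu(\{\cM f>\lambda\})$ in Step~1, the same key inequality $\|f\|_{L^p(\partial T)}\le\|\cP f\|_{H^p}$ in Step~2 (which the paper re-derives directly via \eqref{f:DCT} and Fatou rather than invoking Theorem~\ref{th: charactPoisson}), and the same test function $\chi_{\partial T_{x_0}}$ in Step~3 (where the paper computes $\|\cP\chi_{\partial T_{x_0}}\|_{H^p}^p=m_\nu(x_0)$ exactly rather than using the upper bound from \eqref{necessary}). One notational slip to fix: in your Steps~2 and~3 you let $f$ denote the harmonic function on $T$ (e.g.\ $f=\cP(\chi_{\partial T_{x_0}})$), so the relevant norm is $\|f\|_{L^p(T,\sigma)}$, not $\|\cP f\|_{L^p(T,\sigma)}$---just keep the boundary function and its Poisson integral notationally separate.
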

\begin{proof}
    Assume that $\sigma$ is a Carleson measure. To prove $ii)$, by \eqref{remarkK} it suffices to prove the weak type $(1,1)$ boundedness of $\cP$ and interpolate. For $\lambda>0$ and $f \in L^1(\partial T, \nu)$, set $F_\lambda=\{x \in T  \ : |\cP f(x)|>\lambda\}$. 
  The fact that $x \in T_x$ for every $x \in T$ readily implies that 
\begin{align*}
    F_\lambda \subset \cup_{x \in F_\lambda} T_x.
\end{align*} Since when $T_x \cap T_y \ne \emptyset$ we have that $T_x \subset T_y$ or $T_y \subset T_x$, there exists a nonempty set $F'_\lambda\subset F_\lambda$ such that for every $x,y \in F'_\lambda$ we have that $T_x \cap T_y\ne \emptyset$ implies that $x=y$ and $$\cup_{x \in F'_\lambda} T_x=\cup_{x \in F_\lambda} T_x.$$ 
It follows that 
\begin{align*}
    F_\lambda \subset \cup_{x \in F'_\lambda} T_x,
\end{align*} where $\{T_x\}_{x \in F'_\lambda}$ are pairwise disjoint. We conclude that 
\begin{align}\label{disjoint-boundary}
    \sigma(F_\lambda) \le \sum_{x \in F'_\lambda} \sigma(T_x) \le C \sum_{x \in F'_\lambda} \nu(\partial T_x).
\end{align} Next, observe that if $\cP f(x)>\lambda$ then $\cU \cP f(\omega)>\lambda$ for every $\omega \in \partial T_x$. This means that $\partial T_x \subset \{\cU (\cP f)>\lambda\}$ for every $x \in F'_\lambda$. Observing that $\{\partial T_x\}_{x \in F'_\lambda}$ are pairwise disjoint,  \eqref{disjoint-boundary} implies
\begin{align*}
    \sigma(F_\lambda) \le C \sum_{x \in F'_\lambda}\nu(\partial T_x \cap \{\cU (\cP f)>\lambda\})  \le C \nu(\{\cU (\cP f)>\lambda\}).
\end{align*}  We conclude by \eqref{f:UK} that     \begin{align*}
    \sigma(F_\lambda) \le C \nu(\{\cM f>\lambda\}),
\end{align*} and now the result follows by Proposition \ref{p:HL}.  
  \\ Next, we  show that $ii)$ implies $iii)$. Indeed, assume that for $p>1$
  \begin{align}\label{4}
      \|\cP f\|_{L^p(T,\sigma)} \le C\|f\|_{L^p(\partial T,\nu)} \qquad \forall f \in L^p(\partial T,\nu).
  \end{align}
Then  by \eqref{f:DCT} and Fatou's Lemma, for every $f \in L^p(\partial T,\nu)$ and $k \in \mathbb Z$
\begin{align}\nonumber
\int_{\partial T}|f(\omega)|^p \ \mathrm{d}\nu(\omega)&=\sum_{x \ : \ell(x)=k}\int_{\partial T_x} |f(\omega)|^p \ \mathrm{d}\nu(\omega) \\ \nonumber &=  \sum_{x \ : \ell(x)=k} \lim_{j \to -\infty}\|\cP f (\Phi(\cdot,j))\|^p_{L^p(\partial T_x)} \\ \nonumber 
&\le \liminf_{j \to -\infty}\sum_{x \ : \ell(x)=k} \int_{\partial T_x} |\cP f(\Phi(\omega,j))|^p \ \mathrm{d}\nu(\omega) \\ \nonumber
&\le \sup_{j \le k} \sum_{x \ : \ell(x)=k} \int_{\partial T_x} |\cP f(\Phi(\omega,j))|^p \ \mathrm{d}\nu(\omega) \\ \label{(numero)}
&=\sup_{j \le k} \sum_{x \ : \ell(x)=k} \sum_{\substack{y \le x \\ \ell(y)=j}} |\cP f(y)|^p m_\nu(y) \\\nonumber 
&=\sup_{j \le k} \sum_{y : \ell(y)=j} |\cP f(y)|^p m_\nu(y) \\\label{3}&\le \|\cP f\|_{H^p}^p, 
    \end{align} where in \eqref{(numero)} we have used that $\omega \mapsto \cP f (\Phi(\omega,j))$ is constant on $\partial T_{\Phi(\omega,j)}$ for every $j$ fixed. Next, \eqref{4}  together with \eqref{3} imply $iii)$. \\  Now assume $iii)$    and consider $f=\chi_{\partial T_v}$ for some $v \in T$. By \eqref{intK=1}, on the one hand, we have  that
    \begin{align}\nonumber
        \|\cP f\|_{L^p(T,\sigma)}^p&=\sum_{x \in T} \bigg|\int_{\partial T}P(x,\omega) f(\omega) \ \mathrm{d}\nu(\omega) \bigg|^p \sigma(x)\\ \nonumber &\ge\sum_{x \le v} \bigg|\int_{\partial T}P(x,\omega) f(\omega) \ \mathrm{d}\nu(\omega) \bigg|^p \sigma(x)\\ \nonumber
        &=\sum_{x \le v} \bigg|\frac{1}{m_\nu(x)} \int_{\partial T_v} \chi_{\partial T_x}(\omega) \ \mathrm{d}\nu(\omega)\bigg|^p \sigma(x)\\\nonumber 
        &=\sum_{x \le v} \sigma(x)\\ \label{2}
        &=\sigma(T_v).
    \end{align}
  In the second last equality, we have used that $\partial T_x \subset \partial T_v$ for every $x \le v$ and $\nu(\partial T_x)=m_\nu(x)$.   On the other hand, we claim that
    \begin{align}\label{1}
        \|\cP f\|_{H^p}^p=m_\nu(v).
\end{align} This would conclude the proof since $i)$ follows by combining \eqref{1},\eqref{2}, and $iii)$. \\  To prove \eqref{1}, we observe that 
$$\cP f(x)=\frac{1}{m_\nu(x)} \nu(\partial T_x \cap \partial T_v),$$
which is non-zero if and only if $x \le v$ or $v \le x$. In the first cases $\cP f(x)=1$ and in the second case $\cP f(x)=\displaystyle\frac{m_\nu(v)}{m_\nu(x)}$.

Thus, for every $k \le \ell(v)$ the above considerations and the definition of flow measure yield
\begin{align*}
    \sum_{\ell(x)=k}|\cP f(x)|^p m_\nu(x)=\sum_{x \le v, \ell(x)=k}  m_\nu(x)=m_\nu(v),
\end{align*} while for every $k>\ell(v)$, the unique vertex with level $k$ that lies above $v$ is $p^{k-\ell(v)}(v)$ and
\begin{align*}
  \sum_{\ell(x)=k}|\cP f(x)|^p m_\nu(x) &=  \bigg(\frac{m_\nu(v)}{m_\nu(p^{k-\ell(v)}(v))}\bigg)^pm_\nu(p^{k-\ell(v)}(v)) \\ &=\frac{m_\nu(v)^p}{m_\nu(p^{k-\ell(v)}(v))^{p-1}}.
\end{align*}
We conclude by observing that 
$$\frac{m_\nu(v)^p}{m_\nu(p^{k-\ell(v)}(v))^{p-1}} \le m_\nu(v)$$ since the above is equivalent to 
\begin{align*}
    m_\nu(v) \le m_\nu(p^{k-\ell(v)}(v)),
\end{align*} which is always true for $k \ge \ell(v)$ since $m_\nu$ is a flow measure. This proves \eqref{1} and concludes the proof.
  \end{proof}

\section{Carleson measures and BMO}\label{sec:3} 
In this section, we assume that the number of successors of every vertex is bigger than or equal to two. 
In \cite[Proposition 2.2]{LSTV} it is proved that $m$ is a locally doubling flow measure on $T$ if and only if there are two positive constants $c_1,c_2>1$  such that
\begin{align}\label{constants-locdoub}
    c_2 m(y)  \le m(x) \le c_1 m(y), \qquad \forall x \in T, y \in s(x). 
\end{align}
In the same proposition it is also proved that the inequality $m(x) \le c_1 m(y)$ for every $x \in T$ and $y \in s(x)$ implies  $m(x) \ge c_1/(c_1-1) m(y)$ for every $x \in T$ and $y \in s(x)$. 
Moreover, it is known that if $m$ is locally doubling, then the number of neighbours of a vertex is bounded on $T$, see \cite[Corollary 2.3]{LSTV}.
\begin{lemma} Let $\nu$ be a positive measure on $\partial T$. Then, $(\partial T,\rho,\nu)$ is doubling if and only if $(T,d,m_\nu)$ is locally doubling.
\end{lemma}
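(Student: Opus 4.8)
The plan is to reduce the statement to the combinatorial criterion of \cite[Proposition 2.2]{LSTV}: $(T,d,m_\nu)$ is locally doubling if and only if there are constants $c_1,c_2>1$ with $c_2\,m_\nu(y)\le m_\nu(x)\le c_1\,m_\nu(y)$ for all $x\in T$ and $y\in s(x)$. The bridge is the identification of balls with sectors already recorded in the preliminaries: for every $\omega_0\in\partial T$, the balls of positive radius centred at $\omega_0$ are exactly the sectors $\partial T_{\Phi(\omega_0,j)}$, $j\in\mathbb Z$, with $\Phi$ as in Definition \ref{def:Phi}; these are nested and increasing in $j$, each is nonempty (as $\nu$ is positive on balls), $\partial T_{\Phi(\omega_0,j+1)}=\partial T_{p(\Phi(\omega_0,j))}$, and as the radius is multiplied by $e$ the index $j$ of the corresponding sector advances by exactly $1$. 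Since $1<2<e<4$, it follows that doubling a radius once moves the corresponding ball down by at most one index, while doubling it twice moves it down by at least one index. Finally, as $(\omega_0,j)$ ranges over $\partial T\times\mathbb Z$ the vertex $\Phi(\omega_0,j)$ ranges over all of $T$, so the comparability $m_\nu(x)\le c_1 m_\nu(y)$ (for $y\in s(x)$) is the same as $m_\nu(p(\Phi(\omega_0,j)))\le c_1\,m_\nu(\Phi(\omega_0,j))$ for all $\omega_0,j$.

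For the implication ``$(T,d,m_\nu)$ locally doubling $\Rightarrow$ $(\partial T,\rho,\nu)$ doubling'' I would fix $\omega_0$ and $r>0$, write $B_\rho(\omega_0,r)=\partial T_{\Phi(\omega_0,j)}$, and note that $B_\rho(\omega_0,2r)$ has index at most $j+1$, hence $B_\rho(\omega_0,2r)\subseteq\partial T_{\Phi(\omega_0,j+1)}=\partial T_{p(\Phi(\omega_0,j))}$; applying the comparability to $p(\Phi(\omega_0,j))$ and to $\Phi(\omega_0,j)\in s(p(\Phi(\omega_0,j)))$ gives $\nu(B_\rho(\omega_0,2r))\le m_\nu(p(\Phi(\omega_0,j)))\le c_1\,m_\nu(\Phi(\omega_0,j))=c_1\,\nu(B_\rho(\omega_0,r))$, which is the doubling property with constant $c_1$.

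For the converse I would fix $x\in T$ and $y\in s(x)$, pick any $\omega_0\in\partial T_y$, and choose a radius $r_0>0$ with $B_\rho(\omega_0,r_0)=\partial T_y$ (index $\ell(y)$). Then $B_\rho(\omega_0,4r_0)$ has index at least $\ell(y)+1$, so it contains $\partial T_{\Phi(\omega_0,\ell(y)+1)}=\partial T_{p(y)}=\partial T_x$; iterating the doubling inequality twice yields $m_\nu(x)=\nu(\partial T_x)\le\nu(B_\rho(\omega_0,4r_0))\le C^2\,\nu(B_\rho(\omega_0,r_0))=C^2\,m_\nu(y)$. This is the one-sided bound $m_\nu(x)\le c_1 m_\nu(y)$ with $c_1=C^2$; by \cite[Proposition 2.2]{LSTV} and the remark following it (which upgrades this one-sided bound to the full comparability \eqref{constants-locdoub}), $(T,d,m_\nu)$ is locally doubling.

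The argument is essentially bookkeeping, and I expect the only point requiring care to be the elementary scale-matching observation $1<2<e<4$, which lets a metric doubling condition (radii multiplied by $2$) and the one-step flow-measure comparability \eqref{constants-locdoub} (levels changed by $1$, i.e.\ radii scaled by $e$) be played off against each other; one should also be mildly careful about whether balls are open or closed, although the ``advance by exactly one'' statement for the $e$-scaling holds under either convention. The standing assumption of Section \ref{sec:3} that every vertex has at least two successors is not really needed for this argument, beyond guaranteeing that the sectors $\partial T_x$ are strictly nested and that any genuine doubling constant exceeds $1$.
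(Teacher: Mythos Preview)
Your proof is correct and follows essentially the same route as the paper: identify balls in $(\partial T,\rho)$ with boundary sectors $\partial T_x$, translate the doubling condition for $\nu$ into the one-sided flow-measure bound $m_\nu(p(x))\le C\,m_\nu(x)$, and invoke \cite[Proposition 2.2]{LSTV}. The only cosmetic difference is that the paper picks the explicit radius $r=e^{\ell(x)+1-\log 2}$ so that a single doubling jumps exactly one level (yielding the constant $C$ directly), whereas you use the cruder observation $2<e<4$ and double twice to obtain $C^2$; the arguments are otherwise the same.
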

\begin{proof} Fix $\omega_0 \in \partial T$ and observe that for a given $r>0$ 
\begin{align*}
    B_\rho(\omega_0,r)=\{ \omega \in \partial T \ : \ \ell(\omega \wedge \omega_0) \le \log r\}=\partial T_{\Psi(\omega_0,r)},
\end{align*} where $\Psi(\omega_0,r)$ is the unique vertex such that $\ell(\Psi(\omega_0,r))=\lfloor \log r\rfloor$ and $\Psi(\omega_0,r) \in (\omega_0,\omega_*)$. Then, $\nu(B_\rho(\omega,r))=m_\nu(\Psi(\omega_0,r))$. Similarly, 
\begin{align*}
B_\rho(\omega_0,2r)=\partial T_{\Psi(\omega_0,2r)},
\end{align*} and $\ell(\Psi(\omega_0,2r))=\lfloor \log 2r\rfloor \le \lfloor \log r \rfloor+1=\ell(\Psi(\omega_0,r))+1=\ell(p(\Psi(\omega_0,r))).$ Thus $(\partial T, \rho, \nu)$ is doubling if and only if there exists a constant $C>0$ such that for every $\omega_0 \in \partial T$ and $r>0$
\begin{align}\label{equivalence}
\frac{m_\nu(\Psi(\omega_0,2r))}{m_\nu(\Psi(\omega_0,r))} \le C.
\end{align} For every $x \in T$ and $\omega \in \partial T_x$,  we choose $r=e^{\ell(x)+1-\log 2}$ and we get that $\Psi(\omega,r)=x$ and $\Psi(\omega,2r)=p(x)$. Therefore, we deduce that \eqref{equivalence} is equivalent to $$\frac{m_\nu(p(x))}{m_\nu(x)}\le C \qquad \forall x \in T,$$ so we conclude by invoking \eqref{constants-locdoub} and the discussion thereafter.
\end{proof} From now on we shall assume that $\nu$ is a doubling measure on $\partial T$. This allows us to consider on $\partial T$ the $BMO$ space defined in \cite{CW2}. More precisely, we set $$BMO=\{b\in L^1_{\mathrm{loc}}(\partial T,\nu) \ : \ \|b\|_{BMO}<\infty\},$$ where $$\|b\|_{BMO}:=\sup_{x \in T}\frac{1}{\nu(\partial T_x)}\int_{\partial T_x}|b(\omega)-b_{\partial T_x}| \ \mathrm{d}\nu(\omega)$$ and for every $E \subset \partial T$ we set  $b_E=\frac{1}{\nu(E)}\int_{E} b \ \mathrm{d}\nu$.
\begin{definition}\label{def:O}
    We define a space of integral operators that we denote by $\cO$ such that every $\cK$ in $\cO$ acts on functions on $\partial T$ and it has an integral kernel $K(\cdot,\cdot) : T \times \partial T \to \mathbb C$ that satisfies the following cancellation, integrability and decay properties:
\begin{enumerate}
        \item\label{A1} for every fixed $x_0 \in T$ the map $\partial T \ni \omega \mapsto K(x_0,\omega)$ is integrable and $$\int_{\partial T}K(x_0,\omega)  \ \mathrm{d}\nu(\omega)=0;$$  
        \item\label{A2}  $$C_K:=\esssup_{\omega \in \partial T}\ \sum_{x \in T}|K(x,\omega)|m_\nu(x)<\infty;$$ 
        \item\label{A3} there exists $\alpha>0$ such that $|K(x,\omega)|\le \frac{m_\nu(x)^{\alpha}}{m_\nu(x\wedge \omega)^{\alpha+1}}$, for every $x \in T$ and $\omega \in \partial T$. 
\end{enumerate} 
\end{definition}
\begin{theorem}\label{th: BMO}
   Let $b$ be a locally integrable function. Then, the following are equivalent facts: 
    \begin{itemize}
    \item[$i)$] $b \in BMO$;
        \item[$ii)$] there exists a function $f:[0,\infty)\times (0,\infty) \to [0,\infty)$ depending on $b$ that is increasing in the first variable and for every $\cK \in \cO$ the measure $\sigma$ defined by $\sigma=|\cK b|m_\nu$ is a Carleson measure satisfying $$\sigma(T_v)\le f(C_K,\alpha)m_\nu(v) \qquad \forall v \in T,$$ where $C_K$ is an in \eqref{A2} and $\alpha$  as in \eqref{A3}.
    \end{itemize}
\end{theorem}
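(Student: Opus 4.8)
The plan is to prove both implications $i) \Rightarrow ii)$ and $ii) \Rightarrow i)$ separately, exploiting the dyadic structure of $\{\partial T_x\}_{x\in T}$ together with the doubling hypothesis on $\nu$.

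For the direction $i) \Rightarrow ii)$, fix $b \in BMO$ and $\cK \in \cO$ with kernel $K$, and fix $v \in T$. We must estimate $\sigma(T_v) = \sum_{x \le v} |\cK b(x)| m_\nu(x)$. The key device is the John--Nirenberg inequality on the space of homogeneous type $(\partial T, \rho, \nu)$: since $\nu$ is doubling, $b \in BMO$ implies exponential integrability of $b - b_{\partial T_w}$ on each ball $\partial T_w$, and in particular control of the $L^2$ (or any $L^q$) oscillation. The strategy is to split $b = (b - b_{\partial T_v}) + b_{\partial T_v}$; the constant part is annihilated by the cancellation property \eqref{A1}, so $\cK b(x) = \cK(b - b_{\partial T_v})(x)$ for all $x$. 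Then for $x \le v$ I would write $b - b_{\partial T_v} = \sum_{k \ge 0} (b_{\partial T_{p^k(x)\wedge v}} - b_{\partial T_{p^{k+1}(x)\wedge v}})$-type telescoping along the geodesic from $x$ up to $v$, together with the genuinely oscillatory remainder near $x$; use \eqref{A3} to bound $|K(x,\omega)|$ by $m_\nu(x)^\alpha / m_\nu(x\wedge\omega)^{\alpha+1}$ and integrate over the annuli $\partial T_{p^j(x)} \setminus \partial T_{p^{j-1}(x)}$ (for $x \le v$, intersected with $\partial T_v$), on each of which $x \wedge \omega = p^j(x)$. The doubling property gives $m_\nu(p^j(x)) \le c_1^j m_\nu(x)$ hence $m_\nu(x)/m_\nu(p^j(x)) \le (\text{something} < 1)^j$ only if $\alpha$ is used correctly — here one uses $m_\nu(x) \le \tfrac{c_1-1}{c_1} m_\nu(p(x))$ from \cite{LSTV}, so $m_\nu(x)/m_\nu(p^j(x)) \le \theta^j$ with $\theta = (c_1-1)/c_1 \in (0,1)$, and the geometric factor $\theta^{\alpha j}$ beats the growth $\|b\|_{BMO}\cdot j$ of the telescoped averages. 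Summing over $j$ yields $|\cK b(x)| \le C_{\alpha} \|b\|_{BMO}$ when $x\le v$, plus a tail from $\omega \notin \partial T_v$ controlled using \eqref{A2}; then $\sigma(T_v) = \sum_{x\le v}|\cK b(x)| m_\nu(x) \le C_\alpha \|b\|_{BMO} \sum_{x \le v} m_\nu(x)$. The last sum is \emph{not} $m_\nu(v)$ in general, so one must instead organize the estimate level by level: $\sum_{x \le v,\ \ell(x)=k} m_\nu(x) = m_\nu(v)$ for each $k \le \ell(v)$, and one needs an extra decay in $\ell(v) - k$ coming from \eqref{A2} to make $\sum_{k} (\cdots) m_\nu(v)$ converge. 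Concretely, $C_K = \esssup_\omega \sum_x |K(x,\omega)| m_\nu(x)$ already encodes summability in $x$, so integrating the defining inequality $\sigma(T_v) \le \int_{\partial T}\sum_{x\le v}|K(x,\omega)|m_\nu(x)\,\mathrm{d}\nu(\omega)$ and splitting $\omega \in \partial T_v$ versus not, bounding $b - b_{\partial T_v}$ by John--Nirenberg on $\partial T_v$ and by a telescoping-plus-$C_K$ argument outside, should give $\sigma(T_v) \le f(C_K,\alpha) m_\nu(v)$ with $f$ increasing in $C_K$.

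For the converse $ii) \Rightarrow i)$, I would exhibit a single well-chosen $\cK \in \cO$ whose Carleson bound forces the $BMO$ bound. The natural candidate is built from martingale-type differences: for $x \in T$ define
\[
\cK b(x) = b_{\partial T_x} - b_{\partial T_{p(x)}},
\]
i.e. $K(x,\omega) = \dfrac{\chi_{\partial T_x}(\omega)}{m_\nu(x)} - \dfrac{\chi_{\partial T_{p(x)}}(\omega)}{m_\nu(p(x))}$. One checks \eqref{A1} (integral zero, since each term integrates to $1$), \eqref{A2} (here the doubling/local-doubling bound \eqref{constants-locdoub} and the bounded-degree consequence give $\sum_x |K(x,\omega)| m_\nu(x) \le C$ uniformly, because only vertices on the geodesic $(\omega,\omega_*)$ and their parents contribute and the telescoping collapses), and \eqref{A3} with a suitable $\alpha$ depending on $c_1,c_2$. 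Then the hypothesis gives, for every $v$,
\[
\sum_{x \le v} |b_{\partial T_x} - b_{\partial T_{p(x)}}|\, m_\nu(x) = \sigma(T_v) \le f(C_K,\alpha)\, m_\nu(v),
\]
which is precisely a (weighted, summed) control of the martingale square/absolute differences of $b$ on the dyadic system rooted at $v$. A standard argument — the converse direction of the Fefferman--Stein-type theorem, or directly the fact that dyadic Carleson sequences of martingale increments characterize dyadic $BMO$, together with the fact that on a doubling space $\partial T$ every ball $\partial T_w$ equals a dyadic cell — converts this into $\tfrac{1}{\nu(\partial T_w)}\int_{\partial T_w}|b - b_{\partial T_w}|\,\mathrm{d}\nu \le C$ for all $w$, i.e. $b \in BMO$. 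One realizes $b - b_{\partial T_w}$ on $\partial T_w$ as an almost-everywhere convergent sum $\sum_{x \le w,\ x \ni \omega} (b_{\partial T_x} - b_{\partial T_{p(x)}})$ down the geodesic, apply Cauchy--Schwarz or the triangle inequality and the Carleson bound restricted to sub-sectors of $\partial T_w$.

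The main obstacle I expect is the forward direction's quantitative bookkeeping: getting the \emph{correct homogeneity} so that the final bound is $f(C_K,\alpha) m_\nu(v)$ and not $f(C_K,\alpha)\big(\sum_{x\le v} m_\nu(x)\big)$, which would be strictly weaker and in infinite trees infinite. This forces one to use \eqref{A2} not merely qualitatively but to extract, for each level $k \le \ell(v)$, decay in $\ell(v)-k$; equivalently, one must interchange the $x$-sum and the $\omega$-integral and use that $\sum_{x : x \le v} |K(x,\omega)| m_\nu(x) \le C_K$ pointwise in $\omega$, then multiply by $\nu(\partial T_v) = m_\nu(v)$ after absorbing the $BMO$ oscillation of $b$ over $\partial T_v$ via John--Nirenberg, handling the contribution of $\omega \notin \partial T_v$ by the decay \eqref{A3} along the geodesic from $v$ to $\omega\wedge v$ combined with doubling. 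Making the two contributions fit together cleanly, and verifying that the candidate kernel in the converse genuinely lies in $\cO$ (especially property \eqref{A3} with an admissible $\alpha$), are the two places where care is needed; everything else is the standard machinery of spaces of homogeneous type applied to $(\partial T,\rho,\nu)$.
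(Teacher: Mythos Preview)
Your plan for $i)\Rightarrow ii)$ eventually lands on the same argument as the paper: subtract a constant using \eqref{A1}, interchange the $x$--sum and $\omega$--integral, bound $\sum_{x\le v}|K(x,\omega)|m_\nu(x)\le C_K$ for $\omega$ near $\partial T_v$, and handle $\omega$ far from $\partial T_v$ by the decay \eqref{A3} on annuli $\partial T_{p^{k+1}(v)}\setminus\partial T_{p^k(v)}$ together with telescoped $BMO$ averages. Two remarks: the John--Nirenberg inequality is unnecessary, since for $\omega\in\partial T_{p(v)}$ the inner sum is already $\le C_K$ and only the $L^1$ oscillation $\int_{\partial T_{p(v)}}|b-b_{\partial T_{p(v)}}|\,\mathrm d\nu\le c_1\|b\|_{BMO}m_\nu(v)$ is used; and the annular decomposition should be taken around $v$, not around each $x$, exactly as you describe in your final paragraph.

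Your converse $ii)\Rightarrow i)$ has a genuine gap: the martingale--difference kernel
\[
K(x,\omega)=\frac{\chi_{\partial T_x}(\omega)}{m_\nu(x)}-\frac{\chi_{\partial T_{p(x)}}(\omega)}{m_\nu(p(x))}
\]
is \emph{not} in $\cO$ because \eqref{A2} fails. For fixed $\omega$, the vertices contributing are the children of $\Phi(\omega,j)$ for each $j\in\bbZ$, and a direct computation gives
\[
\sum_{x\in s(\Phi(\omega,j))}|K(x,\omega)|\,m_\nu(x)=2\Bigl(1-\tfrac{m_\nu(\Phi(\omega,j-1))}{m_\nu(\Phi(\omega,j))}\Bigr)\ge 2\bigl(1-c_2^{-1}\bigr)>0
\]
by \eqref{constants-locdoub}, so the full sum over $j\in\bbZ$ diverges and $C_K=\infty$. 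The ``telescoping collapses'' only without absolute values. Condition \eqref{A3} also fails as written when $\omega\in\partial T_{p(x)}\setminus\partial T_x$, since then $|K(x,\omega)|=m_\nu(p(x))^{-1}$ while the allowed bound is $m_\nu(x)^\alpha m_\nu(p(x))^{-\alpha-1}$, and $m_\nu(p(x))\ge c_2\,m_\nu(x)>m_\nu(x)$; this particular issue can be fixed by rescaling, but the failure of \eqref{A2} cannot.

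The paper avoids this by testing against a \emph{family} of very simple kernels rather than a single one: for each $y\in T$ and each bounded function $a_y$ supported in $\partial T_y$ with $\int a_y\,\mathrm d\nu=0$ and $\|a_y\|_\infty\le m_\nu(y)^{-1}$, set $K_{a_y}(x,\omega)=a_y(\omega)$ if $x=y$ and $0$ otherwise. Then $K_{a_y}$ satisfies \eqref{A1}--\eqref{A3} with $C_{K_{a_y}}\le 1$ and $\alpha=1$, so hypothesis $ii)$ applied at $v=y$ yields $\bigl|\int_{\partial T_y}a_y b\,\mathrm d\nu\bigr|\le f(1,1)$ uniformly in $y$ and $a_y$. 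Choosing $a_y$ to be essentially $\tfrac{1}{2m_\nu(y)}\operatorname{sgn}(b-b_{\partial T_y})$ (corrected to have zero mean) recovers the $BMO$ bound directly. The key point you missed is that $ii)$ gives a bound depending only on $(C_K,\alpha)$, so one may vary the kernel within a class of uniformly bounded $C_K$ and fixed $\alpha$; exploiting this freedom is what makes the converse work.
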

\begin{proof} We first prove that $i)$ implies $ii)$. \\ 
    Fix $v \in T$ and assume that $b \in BMO$. We have that
    \begin{align*}
        \sigma(T_v)=\sum_{x \in T_v} |\cK b(x)| m_\nu(x).
    \end{align*}  Observe that, by $\eqref{A1}$, 
    $$|\cK b(x)|\le |\cK[(b-b_{\partial T_{p(v)}})\chi_{\partial T_{p(v)}}](x)|+|\cK[(b-b_{\partial T_{p(v)}})\chi_{\partial T_{p(v)}^c}](x)|.$$ For notational convenience, we set $g_v=b-b_{\partial T_{p(v)}}$. Thus,
    \begin{align*}
        \sigma(T_v) \le \sum_{x \in T_v} |\cK (g_v\chi_{\partial T_{p(v)}})(x)|m_\nu(x)+\sum_{x \in T_v} |\cK (g_v\chi_{\partial T_{p(v)}^c})(x)|m_\nu(x)=:I_1+I_2.
    \end{align*} Observe that by  $\eqref{A2}$
    \begin{align*}
        I_1&\le \sum_{x \in T_v} \int_{\partial T_{p(v)}}|K(x,\omega)||g_v(\omega)| \ \mathrm{d}\nu(\omega) m_\nu(x) \\ 
        &=\int_{\partial T_{p(v)}}|g_v(\omega)| \sum_{x \in T_v}|K(x,\omega)|m_\nu(x) \ \mathrm{d}\nu(\omega) \\ 
        &\le c_1C_K \|b\|_{BMO} m_\nu(v),
    \end{align*} because $m_\nu(p(v))\le c_1 m_\nu(v)$ by \eqref{constants-locdoub}.  Similarly,  since $$\partial T_v^c=\cup_{k=0}^\infty \partial T_{p^{k+1}(v)} \setminus \partial T_{p^{k}(v)}$$ and $\{T_{p^{k+1}(v)} \setminus \partial T_{p^{k}(v)}\}_{k=0}^\infty$ are pairwise disjoint,  
    \begin{align*}
        I_2 &\le \sum_{x \in T_v} \sum_{k=0}^\infty \int_{\partial T_{p^{k+1}(v)}\setminus \partial T_{p^k(v)}}|K(x,\omega)| |g_v(\omega)| \ \mathrm{d}\nu(\omega) m_\nu(x)=:\sum_{x \in T_v} \sum_{k=0}^\infty J_k m_\nu(x).
    \end{align*}  Observe that  for every $x \in T_v$ and $\omega \in \partial T_{p^{k+1}(v)} \setminus \partial T_{p^k(v)}$ we have that $x\wedge \omega=p^{k+1}(v)$. Hence, by   $\eqref{A3}$
    \begin{align*}
        J_k &= \int_{\partial T_{p^{k+1}(v)}\setminus \partial T_{p^k(v)}}|K(x,\omega)| |g_v(\omega)| \ \mathrm{d}\nu(\omega) \\ &\le \int_{\partial T_{p^{k+1}(v)}\setminus \partial T_{p^k(v)}} \frac{m_\nu(x)^\alpha}{m_\nu(p^{k+1}(v))^{\alpha+1}}|g_v(\omega)| \ \mathrm{d}\nu(\omega) \\ 
        &\le \frac{m_\nu(x)^\alpha}{m_\nu(p^{k+1}(v))^{\alpha+1}}\int_{\partial T_{p^{k+1}(v)}} |g_v(\omega)| \ \mathrm{d}\nu(\omega)=: \frac{m_\nu(x)^\alpha}{m_\nu(p^{k+1}(v))^{\alpha+1}} S_k.
    \end{align*} Moreover, we set $v_k=p^k(v)$ for every $k \ge 0$ and observe that 
    \begin{align*}
        S_k&=\int_{\partial T_{v_{k+1}}} |b(\omega)-b_{\partial T_{v_1}}| \ \mathrm{d}\nu(\omega)\\&\le \int_{\partial T_{v_{k+1}}} |b(\omega)-b_{\partial T_{v_{k+1}}}| \ \mathrm{d}\nu(\omega)+ \int_{\partial T_{v_{k+1}}} |b_{\partial T_{v_{k+1}}}-b_{\partial T_{v_1}}| \ \mathrm{d}\nu(\omega) 
        \\&\le m_\nu(v_{k+1})\|b\|_{BMO}+m_\nu(v_{k+1})\sum_{j=1}^{k}|b_{\partial T_{v_{j+1}}}-b_{\partial T_{v_j}}|
        \\&\le c_1 m_\nu(v_{k+1})(k+1) \|b\|_{BMO}, 
    \end{align*} because
    \begin{align*}
        |b_{\partial T_{v_{j+1}}}-b_{\partial T_{v_{j}}}|&\le \frac{1}{m_\nu(v_j)}\int_{\partial T_{v_j}}|b(\omega)-b_{\partial T_{v_{j+1}}}| \ \mathrm{d}\nu(\omega) \\  &\le \frac{m_\nu(v_{j+1})}{m_\nu(v_{j})}\frac{1}{m_\nu(v_{j+1})}\int_{\partial T_{v_{j+1}}}|b(\omega)-b_{\partial T_{v_{j+1}}}| \ \mathrm{d}\nu(\omega) \\ &\le c_1\|b\|_{BMO}, \qquad \forall j \in \mathbb N,
    \end{align*} where the last inequality follows by \eqref{constants-locdoub}. 
 We conclude that
    \begin{align}\label{stimaI2}
        I_2\le c_1\|b\|_{BMO} \sum_{x \in T_v} m_\nu(x)^{\alpha+1}\sum_{k=0}^\infty (k+1){m_\nu(p^{k+1}(v))^{-\alpha}}.
    \end{align} Since ${\alpha}>0$ we claim that there exists a constant $C_\alpha>0$ such that \begin{align}
    \label{claim1}\sum_{k=0}^\infty (k+1) m_\nu(p^{k+1}(v))^{-\alpha} &\le C_\alpha m_\nu(v)^{-\alpha}
    \end{align} and
    \begin{align} 
    \label{claim2}\sum_{x \in T_v} m_\nu(x)^{1+\alpha} &\le C_\alpha m_\nu(v)^{1+\alpha}.
    \end{align} The above claim, together with \eqref{stimaI2}, implies that
    \begin{align*}
    I_2 \le C_\alpha \|b\|_{BMO} \sum_{x \in T_v} m_\nu(x)^{\alpha+1}{m_\nu(v)^{-\alpha}}\le  C_\alpha \|b\|_{BMO}m_\nu(v). 
    \end{align*} This, combined with the estimates involving $I_1$, yields
    $$\sigma(T_v) \le f(C_K,\alpha)m_\nu(v)$$
    where $f(C_K,\alpha):=\|b\|_{BMO}(c_1C_K+C_\alpha)$, concluding the first part of the proof. It remains to prove  \eqref{claim1} and \eqref{claim2}. By \eqref{constants-locdoub},
    \begin{align*}
        m_{\nu}(p^{k+1}(v)) \ge c_2^k m_\nu(v),
    \end{align*} thus 
    \begin{align*}
        \sum_{k=0}^\infty (k+1) m_\nu(p^{k+1}(v))^{-\alpha} \le \sum_{k=0}^\infty (k+1) c_2^{-k\alpha} m_{\nu}(v)^{-\alpha} \le C_\alpha m_\nu(v)^{-\alpha},
    \end{align*} because $c_2>1$. This proves \eqref{claim1}. Similarly, by \eqref{constants-locdoub} again, we have that $m_\nu(x) \le m_\nu(v) c_2^{-k}$ for every $x \in s_k(v)$ and $k \in \mathbb N$. Thus
        \begin{align}
        \nonumber\sum_{x \in T_v} m_\nu(x)^{\alpha+1}&=\sum_{k=0}^{\infty}\sum_{x \in s_k(v)}m_\nu(x)^{\alpha}m_\nu(x) \\\nonumber &\le \sum_{k=0}^\infty m_\nu(v)^{\alpha} c_2^{-\alpha k} \sum_{x \in s_k(v)}m_\nu(x) \\ &\label{flowmeasurecond}=m_\nu(v)^{\alpha+1} \sum_{k=0}^\infty \frac{1}{c_2^{\alpha k}} \\\nonumber 
        &\le C_\alpha m_\nu(v)^{\alpha+1},
    \end{align} where in \eqref{flowmeasurecond} we have used that $m_\nu$ is a flow measure. This proves \eqref{claim2} and concludes the proof of the first part of the theorem. \\
    We now prove that $ii)$ implies $i)$. Assume $ii)$. For every $y \in T$ let $a_{y}$ be a function supported in $\partial T_y$ with zero integral average and such that \begin{align}\label{taglia}\|a_{y}\|_{L^\infty}\le \frac{1}{m_\nu(y)}.
    \end{align}Define $$K_{a_y}(x,\omega)=\begin{cases}
        a_y(\omega) &\text{if $x=y$,} \\ 
        0&\text{otherwise.}
    \end{cases}$$ Clearly $K_{a_y}$ satisfies  $\eqref{A1}$ and $\eqref{A3}$ with $\alpha=1$ because $$|K_{a_y}(y,\omega)|=|a_y(\omega)|\le \|a_y\|_{L^\infty}\le \frac{1}{m_\nu(y)}=\frac{m_\nu(y)}{m_\nu(\omega \wedge y)^2} \qquad\forall \omega \in \partial T_y,$$ by \eqref{taglia} and the fact that  supp $a_y \subset \partial T_y$. Moreover, $K_{a_y}$ fulfills \eqref{A2} because
    \begin{align*}
        \sum_{x\in T} |K_{a_y}(x,\omega)|m_\nu(x)=|a_y(\omega)|m_\nu(y) \le 1 \qquad \forall \omega \in \partial T,
    \end{align*} again by invoking \eqref{taglia}. In particular, $C_{K_{a_{y}}}\leq 1$.
    By $ii)$ and the definition of $K_{a_y}$,
    \begin{align*}
        m_\nu(y) f(1,1)\ge \sum_{x \le y}|\cK_{a_y} b(x)|m_\nu(x)=m_\nu(y)\bigg|\int_{\partial T_y} a_y(\omega) b(\omega) \ \mathrm{d}\nu(\omega)\bigg|,
    \end{align*}
     from which it follows
     \begin{align}\label{sup:bmo}
    \sup_{y \in T} \bigg| \int_{\partial T}a_y(\omega) b(\omega) \ \mathrm{d}\nu(\omega)\bigg| \le f(1,1),\end{align}
    where $f$ is as in $ii)$.
     It is easy to deduce that \eqref{sup:bmo} implies that $b \in BMO.$ 
    Indeed,  for every $x \in T$ we have to prove that  
    \begin{align}\label{bmo}
       \frac{1}{m_\nu(x)} \int_{\partial T_x}|b(\omega)-b_{\partial T_x}| \ \mathrm{d}\nu(\omega) \le C
    \end{align} for some absolute constant $C>0$. We shall give a suitable definition of $a_y$ to get \eqref{bmo}. We will deal with the real case; if $b$ is complex-valued the argument can be slightly modified to obtain the same result. Define on $\partial T_y$ the function $a_y'$ by $$a_y'(\omega)=\begin{cases}
        1 &\text{if $b(\omega) \ge b_{\partial T_y}$,} \\ 
        -1 &\text{if $b(\omega) < b_{\partial T_y}$}.
    \end{cases}$$ Set $a_y(\omega)=\frac{1}{2m_\nu(y)}[a_y'(\omega)-(a_y')_{\partial T_y}]$ for every $\omega \in \partial T_y$. Observe that $a_y$ has zero average on $\partial T_y$ and satisfies \eqref{taglia} for every $x \in T$. Moreover,
\begin{align*}
    (a_y')_{\partial T_y}=\frac{1}{m_\nu(y)} \int_{\partial T_y} a_y'(\omega) \in [-1,1].
\end{align*}
  
   Using that $a_y$ has zero integral average on $\partial T_y$ and \eqref{sup:bmo}, it follows that 
\begin{align*}
   f(1,1) &\ge \bigg|\int_{\partial T_y} a_y(\omega)b(\omega) \ \mathrm{d}\nu(\omega) \bigg| \\
    &=\bigg|\int_{\partial T_y} a_y(\omega)\big(b(\omega)-b_{\partial T_y}\big) \ \mathrm{d}\nu(\omega) \bigg| \\ 
    &=\frac{1}{2m_\nu(y)}\bigg|\int_{\partial T_y}|b(\omega)-b_{\partial T_y}| \ \mathrm{d}\nu(\omega)-(a'_y)_{\partial T_y}\int_{\partial T_y}b(\omega)-b_{\partial T_y} \ \mathrm{d}\nu(\omega)\bigg| \\ 
    &= \frac{1}{2m_\nu(y)}\int_{\partial T_y}|b(\omega)-b_{\partial T_y}| \ \mathrm{d}\nu(\omega),
\end{align*}  because 
\begin{align*}
    \int_{\partial T_y}b(\omega)-b_{\partial T_y} \ \mathrm{d}\nu(\omega)=0.
\end{align*}
    This implies \eqref{bmo} and thus $b \in BMO.$
\end{proof}
\begin{remark}
    Note that   $\eqref{A3}$ does not imply   $\eqref{A2}$. Indeed, if $K(x,\omega)=\displaystyle\frac{m_\nu(x)^{\alpha}}{m_\nu(\omega \wedge x)^{\alpha+1}} $ for some $\alpha>0$ then $K$ clearly satisfies \eqref{A3} but
\begin{align*}
 \sum_{x \in T}\frac{m_\nu(x)^{\alpha}}{m_\nu(\omega \wedge x)^{\alpha+1}} m_\nu(x) \ge  & \sum_{x \in(\omega,\omega_*)}1 =\infty \qquad \forall \omega \in \partial T,
 \end{align*} so $K$ does not satisfy \eqref{A2}. 
 \end{remark}
 \begin{example}We provide some examples of operators in $\cO$. Given  $\delta>0$ and $\alpha>0$, assume that a function $K_\delta$ satisfies 
 \begin{align}\label{assumption4}
|K_\delta(x,\omega)|\le \frac{m_\nu(x)^\alpha}{m_\nu(\omega \wedge x)^{\alpha+1}}\min\bigg\{ \frac{1}{m_\nu(\omega \wedge x)}, m_\nu(\omega \wedge x)\bigg\}^{1+\delta}.
 \end{align} It is clear that 
 \begin{align*}
  |K_\delta(x,\omega)|\le \frac{m_\nu(x)^\alpha}{m_\nu(\omega \wedge x)^{\alpha+1}}.
 \end{align*} Moreover, recalling the map $\Phi$ in Definition \ref{def:Phi} and that $\omega \wedge x=\Phi(\omega,k)$ for every $x \in T_{\Phi(\omega,k)}\setminus T_{\Phi(\omega,k-1)}$ and $k \in \mathbb N$,
 \begin{align}
     \nonumber\sum_{x \in T}&|K_\delta(x,\omega)| m_\nu(x) \\ \nonumber&\le \sum_{k=-\infty}^{\infty}\sum_{\substack{x \in T_{\Phi(\omega,k)}, \\ \nonumber x \not \in T_{\Phi(\omega,k-1)}}} \frac{m_\nu(x)^{\alpha+1}}{m_\nu(\Phi(\omega,k))^{\alpha+1}}\min\bigg\{\frac{1}{m_\nu(\Phi(\omega,k))},m_\nu(\Phi(\omega,k))\bigg\}^{1+\delta} \\ \nonumber &\le \sum_{k=-\infty}^{\infty}\min\bigg\{\frac{1}{m_\nu(\Phi(\omega,k))^{\alpha+2+\delta}},m_\nu(\Phi(\omega,k))^{\delta-\alpha}\bigg\}\sum_{x \in T_{\Phi(\omega,k)}} m_\nu(x)^{\alpha+1} \\ 
     &\label{power-flow} \le C \sum_{k=-\infty}^{\infty}\min\bigg\{\frac{1}{m_\nu(\Phi(\omega,k))^{2+\delta+\alpha}},m_\nu(\Phi(\omega,k))^{\delta-\alpha}\bigg\}m_\nu(\Phi(\omega,k))^{\alpha+1} \\ \nonumber
     &=C\sum_{k=-\infty}^{\infty}\min\bigg\{\frac{1}{m_\nu(\Phi(\omega,k))^{\delta+1}},m_\nu(\Phi(\omega,k))^{\delta+1}\bigg\}\\ \nonumber &=C\bigg(\sum_{k \ : \ m_\nu(\Phi(\omega,k)) \ge 1} \frac{1}{m_\nu(\Phi(\omega,k))^{\delta+1}}+\sum_{k\ : \ m_\nu(\Phi(\omega,k))<1} m_\nu(\Phi(\omega,k))^{\delta+1}\bigg)
 \end{align} where  \eqref{power-flow} is proved as in \eqref{claim2}. We next claim that 
 \begin{align*}
\sum_{k \ : \ m_\nu(\Phi(\omega,k)) \ge 1} \frac{1}{m_\nu(\Phi(\omega,k))^{\delta+1}}+\sum_{k\ : \ m_\nu(\Phi(\omega,k))<1} m_\nu(\Phi(\omega,k))^{\delta+1}\le C_\delta.
 \end{align*}
 Indeed, set $k_0$ as the biggest integer such that $m_\nu(\Phi(\omega,k_0)) < 1$, which exists by \eqref{constants-locdoub}.  
 By \eqref{constants-locdoub} we deduce that
 \begin{align*}
 \sum_{k \ : \ m_\nu(\Phi(\omega,k)) \ge 1} \frac{1}{m_\nu(\Phi(\omega,k))^{\delta+1}} &\le \frac{1}{m_\nu(\Phi(\omega,k_0+1))^{\delta+1}} \sum_{k \ge k_0+1} \frac{1}{c_2^{(k-k_0-1)(\delta+1)}} \\ &\le C \frac{1}{m_\nu(\Phi(\omega,k_0+1))^{\delta+1}} \le C_\delta,\\ 
     \sum_{k\ : \ m_\nu(\Phi(\omega,k))<1} m_\nu(\Phi(\omega,k))^{\delta+1} &\le m_\nu(\Phi(\omega,k_0))^{\delta+1}\sum_{k \ge k_0} \frac{1}{c_2^{(k-k_0
)(\delta+1)}}\\ &\le C m_\nu(\Phi(\omega,k_0))^{\delta+1} \le C_\delta. 
 \end{align*} In summary, we showed that if $K_\delta$ satisfies \eqref{assumption4} then it also satisfies \eqref{A2}. We conclude by showing that it is possible to construct a $K_\delta$ satisfying \eqref{assumption4} that also fulfills  \eqref{A1}. Indeed, for every fixed $x \in T$, the map $\partial T\ni \omega \mapsto K_\delta(x,\omega)$ is integrable because
 \begin{align*}
     \int_{\partial T} |K_\delta(x,\omega)| \ \mathrm{d}\nu(\omega)&=\int_{\partial T_x}|K_\delta(x,\omega)| \ \mathrm{d}\nu(\omega)+\sum_{k=1}^\infty\int_{\partial T_{p^k(x)}\setminus T_{p^{k-1}(x)}}|K_\delta(x,\omega)| \ \mathrm{d}\nu(\omega)\\ &\le\frac{1}{m_\nu(x)}m_\nu(x)^{2+\delta}\\ &+\sum_{k=1}^\infty \frac{m_\nu(x)^\alpha}{m_\nu(p^k(x))^{\alpha}}\min\bigg\{ \frac{1}{m_\nu(p^k(x))}, m_\nu(p^k(x))\bigg\}^{1+\delta} \\
     &\le C_{x,\delta}.
 \end{align*} Next, we have to construct a kernel that also satisfies the zero integral condition \eqref{A1}. For every $x \in T$  we set $c_x(\omega)=c_x(k)$ if $\omega \in \partial T_{p^k(x)}\setminus \partial T_{p^{k-1}(x)}$ if $k\ge 1$ and $c_x(\omega)=c_x(0)$ if $\omega \in \partial T_x$ where $\{c_x(k)\}_{k \in \mathbb N} \in \ell^2(\mathbb N)$ is to be chosen. We also assume that $\{c_x(k)\}_{k \in \mathbb N}$ belongs to the unit ball of $\ell^\infty(\mathbb N)$. Then we set 
 \begin{align*} K_\delta(x,\omega)=c_x(\omega)\frac{m_\nu(x)^\alpha}{m_\nu(\omega \wedge x)^{\alpha+1}}\min\bigg\{ \frac{1}{m_\nu(\omega \wedge x)}, m_\nu(\omega \wedge x)\bigg\}^{1+\delta}. 
 \end{align*} For notational convenience, for every $k\ge 1$ we set 
 \begin{align*}
     d_x(k)=\int_{\partial T_{p^k(x)}\setminus \partial T_{p^{k-1}(x)}} \frac{m_\nu(x)^\alpha}{m_\nu(\omega \wedge x)^{\alpha+1}}\min\bigg\{ \frac{1}{m_\nu(\omega \wedge x)}, m_\nu(\omega \wedge x)\bigg\}^{1+\delta} \ \mathrm{d}\nu(\omega),
 \end{align*} and similarly 
 \begin{align*}
     d_x(0)=\int_{\partial T_{x}} \frac{m_\nu(x)^\alpha}{m_\nu(\omega \wedge x)^{\alpha+1}}\min\bigg\{ \frac{1}{m_\nu(\omega \wedge x)}, m_\nu(\omega \wedge x)\bigg\}^{1+\delta} \ \mathrm{d}\nu(\omega).
 \end{align*}

 Then
 \begin{align*}
     \int_{\partial T}K_\delta(x,\omega) \ \mathrm{d}\nu(\omega)&=\sum_{k \ge 0 }c_x(k)d_x(k)=\langle c_x, d_x\rangle_{\ell^2(\mathbb N)}.
 \end{align*} Since $\{d_x(k)\}_{k \in \mathbb N} \in \ell^1(\mathbb N) \subset \ell^2(\mathbb N)$, we can choose any $c_x \in \{d_x(k)\}_{k \in \mathbb N}^\perp \subset \ell^\infty(\mathbb N)$  where the orthogonal complement is taken with respect to the $\ell^2(\mathbb N)$ inner product. Since such a $c_x \in \ell^\infty(\mathbb N)$ we can assume without loss of generality that $\|c_x\|_{\ell^\infty(\mathbb N)}=1$. We finally conclude that  for such a choice
 \begin{align*}
       \int_{\partial T}K_\delta(x,\omega) \ \mathrm{d}\nu(\omega)&=0,
 \end{align*} and thus $K_\delta$ fulfills   \eqref{A1},\eqref{A2} and \eqref{A3} of Definition \ref{def:O}.
\end{example}

 \bibliographystyle{plain}
{\small
\bibliography{references}}

\begin{thebibliography}{10}

\bibitem{Carl1}
Lennart Carleson.
\newblock An interpolation problem for bounded analytic functions.
\newblock {\em American Journal of Mathematics}, 80:921, 1958.

\bibitem{Carl2}
Lennart Carleson.
\newblock Interpolations by bounded analytic functions and the corona problem.
\newblock {\em Annals of Mathematics}, 76(3):547--559, 1962.

\bibitem{CCPS}
Joel~M. Cohen, Flavia Colonna, Massimo~A. Picardello, and David Singman.
\newblock Bergman spaces and {C}arleson measures on homogeneous isotropic trees.
\newblock {\em Potential Analysis}, 44:745--766, 2016.

\bibitem{CCS}
Joel~M. Cohen, Flavia Colonna, and David Singman.
\newblock Carleson measures on a homogeneous tree.
\newblock {\em Journal of {M}athematical {A}nalysis and {A}pplications}, 395(1):403--412, 2012.

\bibitem{CCS2}
Joel~M. Cohen, Flavia Colonna, and David Singman.
\newblock Carleson and vanishing {C}arleson measures on radial trees.
\newblock {\em Mediterranean Journal of Mathematics}, 10:1235--1258, 2013.

\bibitem{CW2}
Ronald~R. Coifman and Guido Weiss.
\newblock {\em Extensions of Hardy Spaces and Their Use in Analysis}, pages 295--371.
\newblock Princeton University Press, Princeton, 2006.

\bibitem{FS}
Charles Fefferman, Elias~M. Stein, and Elias~M. Stein.
\newblock ${H}^p$ spaces of several variables.
\newblock {\em Acta Mathematica}, 129:137--193, 1972.

\bibitem{FTN}
Alessandro Figà-Talamanca and Claudio Nebbia.
\newblock {\em Harmonic analysis and representation theory for groups acting on homogenous trees}, volume 162.
\newblock Cambridge University Press, 1991.

\bibitem{Garnett}
John Garnett.
\newblock {\em Bounded analytic functions}, volume 236.
\newblock Springer Science \& Business Media, 2006.

\bibitem{HaS}
Silvia Hartzstein and Oscar Salinas.
\newblock Weighted {BMO} and {C}arleson measures on spaces of homogeneous type.
\newblock {\em Journal of Mathematical Analysis and Applications}, 342(2):950--969, 2008.

\bibitem{LSTV}
Matteo Levi, Federico Santagati, Anita Tabacco, and Maria Vallarino.
\newblock Analysis on trees with nondoubling flow measures.
\newblock {\em Potential Analysis}, 58(4):731--759, 2023.

\bibitem{PW}
Massimo~A. Picardello and Wolfgang Woess.
\newblock Finite truncations of random walks on trees (appendix to: {K}or{\'a}nyi, {A}., {P}icardello, {M}. {A}., and {T}aibleson, {M}.: {H}ardy-spaces on non-homogeneous trees).
\newblock {\em Symp. Math}, 29:206--265, 1988.

\end{thebibliography}
\end{document}